\newcommand{\showcomments}{yes}
\renewcommand{\showcomments}{no}
\newcommand{\hidetodo}[1]
{\ifthenelse{\equal{\showcomments}{yes}}%
{#1}
}
\newsavebox{\commentbox}
\newenvironment{com}%
{\ifthenelse{\equal{\showcomments}{yes}}%
{\footnotemark
        \begin{lrbox}{\commentbox}
        \begin{minipage}[t]{1.25in}\raggedright\sffamily\tiny
        \footnotemark[\arabic{footnote}]}
{\begin{lrbox}{\commentbox}}}%
{\ifthenelse{\equal{\showcomments}{yes}}%
{\end{minipage}\end{lrbox}\marginpar{\usebox{\commentbox}}}
{\end{lrbox}}}
\newtheorem{thm}{Theorem}[section]
\newtheorem{lem}[thm]{Lemma}
\newtheorem{cor}[thm]{Corollary}
\newtheorem{prop}[thm]{Proposition}
\theoremstyle{definition}
\newtheorem{defn}[thm]{Definition}
\newtheorem{rem}[thm]{Remark}
\DeclareMathOperator{\Aut}{Aut}
\DeclareMathOperator{\link}{link}
\DeclareMathOperator{\stab}{Stab}
\DeclareMathOperator{\Stab}{Stab}
\newcommand{\bigslant}[2]{{\raisebox{.2em}{$#1$}\left/\raisebox{-.2em}{$#2$}\right.}}
\begin{document}

\title[The Hrushovski Property For Compact Special Cube Complexes]{The Hrushovski Property For Compact Special Cube Complexes}
\author[B.~Abdenbi]{Brahim Abdenbi}
\email{brahim.abdenbi@mail.mcgill.ca  \, \, \, daniel.wise@mcgill.ca}
\author[D.~T.~Wise]{Daniel T. Wise}
          \address{Dept. of Math. \& Stats.\\
                    McGill Univ. \\
                    Montreal, Quebec, Canada H3A 0B9}
          \subjclass[2020]{20F65}
\keywords{Hrushovski Property, Special Cube Complexes, Subgroup Separability.}
\thanks{Research supported by NSERC}
\date{\today}

\maketitle

\begin{com}
{\bf \normalsize COMMENTS\\}
ARE\\
SHOWING!\\
\end{com}

\begin{abstract}
We show that any compact nonpositively curved cube complex $Y$ embeds in a compact nonpositively curved cube complex $R$ where each combinatorial injective partial local isometry of $Y$ extends to an automorphism of $R$. When $Y$ is special and the collection of injective partial local isometries satisfies certain conditions, we show that $R$ can be chosen to be special and the embedding $Y\hookrightarrow R$ can be chosen to be a local isometry.
 \end{abstract}

\section{Introduction}
A well-known theorem of Hrushovski~\cite{MR1194731} states that for any finite graph $X$, there exists a finite graph $Z$ containing $X$ as an induced subgraph with the property that every isomorphism between induced subgraphs of $X$ extends to an automorphism of $Z$.  \textcolor{black}{Hrushovski's motivation stemmed from the study of automorphisms of the countable random graph. This type of property is known as the \textit{Extension Property for Partial Automorphisms} or the \textit{Hrushovski Property}. More generally, a class of spaces $\mathcal{C}$ has the Hrushovski property if for each $X\in \mathcal{C}$, there is 
$Z\in \mathcal{C}$ containing $X$  such that partial isomorphisms between subspaces of $X$ extend to automorphisms of $Z$. Often, the embedding $X\hookrightarrow Z$ is required to have certain properties. For example, in the original Hrushovski property, we require that $X$ be an induced subgraph of $Z$. Our generalization to \textit{special cube complexes} requires the embedding to be a \textit{local isometry}. See Section~\ref{sec:cubes} for definitions. Various classes of spaces were shown to have this property.} \textcolor{black}{For example, the Hrushovski property was established for finite metric spaces by Solecki  \cite{MR2255813} and independently by Vershick \cite{MR2435153}. More recently, it was established for generalized metric spaces by Conant \cite{MR3874663}. Structures of finite relational languages were shown to have this property by Herwig and Lascar  \cite{MR1621745}, and by Hodkinson and Otto \cite{MR2005955}. Similarly, the Hrushovski property was established for various classes of graphs by Herwig \cite{MR1658539} and for  hypertournaments by Huang, Pawliuk, Sabok, and Wise \cite{MR4048719}, provided that the partial isomorphisms have disjoint domains and ranges. We direct the reader to recent surveys by Nguyen Van Th\'e \cite{MR3362231} and Lascar \cite{MR1957017} for detailed discussions of related results.} 

\textcolor{black}{In this paper, we establish the Hrushovski property for compact nonpositively curved cube complexes, and under certain conditions, for compact special cube} \textcolor{black}{complexes as well. Since their introduction by Gromov \cite{Gromov87}, nonpositively curved cube complexes have been used in various lines of research within geometric group theory and played an important role in recent developments. Their connection to group theory is due to a construction by Sageev \cite{Sageev95} that takes as input a group $G$ and a codimension-$1$ subgroup $H\subset G$, and outputs a CAT$(0)$ cube complex $X$ (that is, $X$ is nonpositively curved and simply connected) with a nontrivial group action $G\curvearrowright X$.}

\textcolor{black}{Special cube complexes were introduced by Haglund-Wise \cite{HaglundWiseSpecial} as nonpositively curved cube complexes whose hyperplanes avoid certain configurations. \textit{Hyperplanes} are connected subspaces built from midcubes. See Section~\ref{sec:cubes} for details. It was shown in \cite{HaglundWiseSpecial} that a nonpositively curved cube complex $X$ is \textit{special} if and only if $\pi_1X$ embeds in a \textit{raag} (right angled Artin group). Raags are rather ``simple" groups that are known to be linear and have many desirable residual properties. See Charney \cite{Charney2007} for a brief introduction to raags. This remarkable connection led to several interesting results in combinatorial group theory and low-dimensional topology (see for example \cite{AgolGrovesManning2012} and \cite{WiseCBMS2012}), and makes special cube complexes a particularly natural generalization of graphs. So, the question of whether special cube complexes have the Hrushovski property is a natural one to pursue.}

\textcolor{black}{Our approach to this question has similarities with the work of Herwig and Lascar who showed in ~\cite{MR1621745} that the Hrushovski property for certain spaces is related to the \textit{profinite topology} of free groups. This is the topology generated by finite index subgroups. We make extensive use of this relationship, albeit using a different construction, namely the \textit{horizontal quotients} of \textit{graphs of spaces}. A topological space $X$ is a \textit{graph of spaces} if there is a graph $\Gamma_X$ and quotient map  $X\rightarrow \Gamma_X$ that induces a decomposition of $X$ into \textit{vertex-spaces} $\left\{X_v\ :\ v\in \Gamma_X^0\right\}$ and \textit{thick edge-spaces} $\left\{X_e\times I\ :\ e\in \Gamma_X^1\right\}$ where $I=[-1,1]$ and each $X_v$  (resp. $X_e\times I$) is the preimage of a vertex $v$ (resp. edge $e$) of $\Gamma_X$. The \textit{horizontal quotient} $X^{E}$ is obtained from $X$ by collapsing all thick edge-spaces along their $I$ factor. The general idea is outlined below. See Figure~\ref{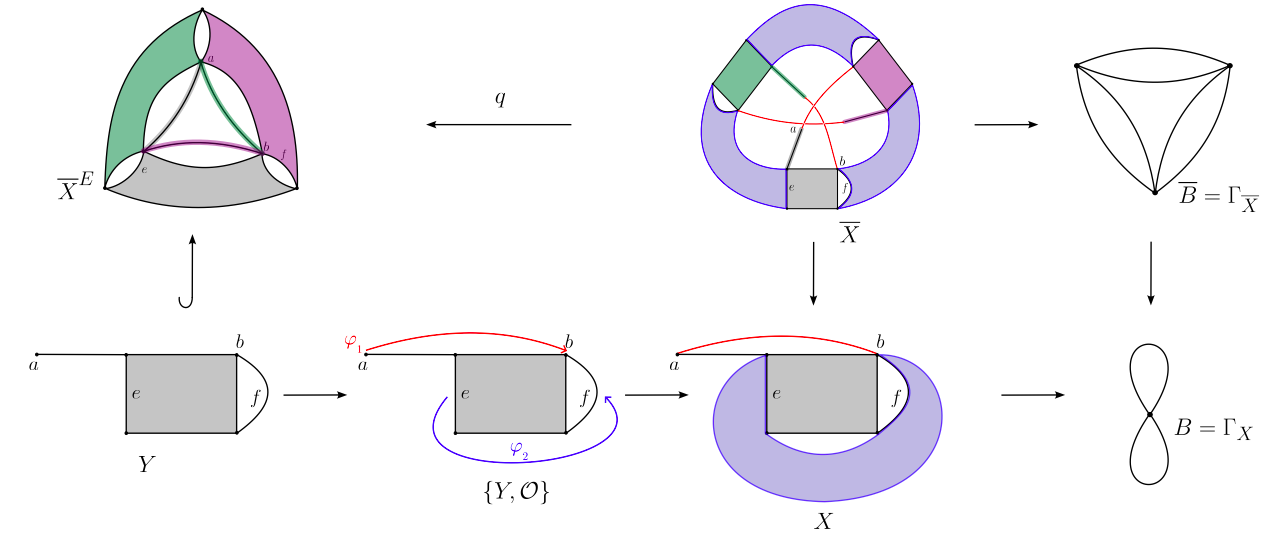}.
\begin{figure}[t]\centering
\includegraphics[width=1\textwidth]{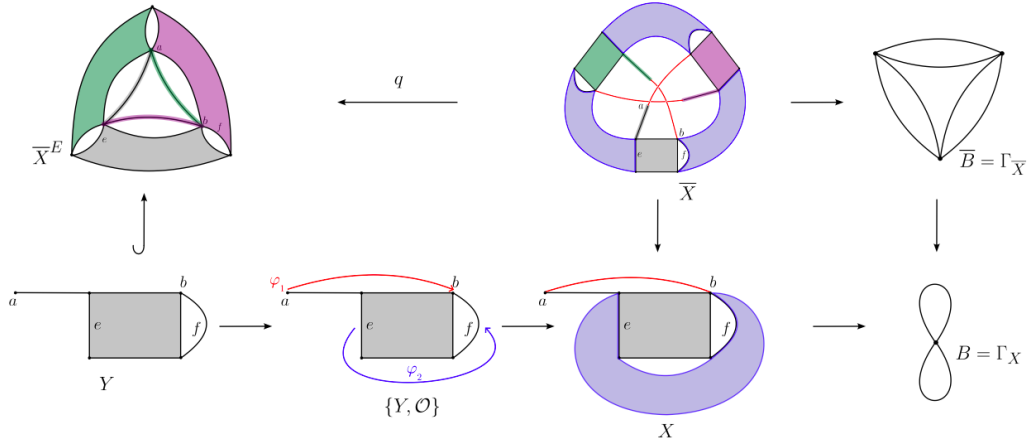}
\caption[example]{\label{hr3.png}
A space $Y$ with partial isomorphisms $\varphi_1, \varphi_2$ (bottom left) embeds in $\overline{X}^{E}$ (top left), where $\varphi_1, \varphi_2$ extend to rotations of $\overline{X}^{E}$.}
\end{figure}
Starting with a space $Y$ and a collection of partial isomorphisms $\mathcal{O}=\left\{\varphi_i:Y_i\subset Y\rightarrow Y\right\}_i$, we build the \textit{realization} $X$ of the pair $\left\{Y,\mathcal{O}\right\}$ by attaching the mapping cylinders of each $\varphi_i$ to $Y$ in the obvious way. The resulting space $X$ has a graph of spaces structure $X\rightarrow \Gamma_X=B$ where $B$ is a bouquet of circles. Each covering map $\overline{B}\rightarrow B$ induces a covering map $\overline{X}\rightarrow X$ where $\overline{X}\rightarrow \overline{B}=\Gamma_{\overline{X}}$ is a graph of spaces decomposition so that the following diagram commutes:}
\begin{center}
    \begin{tikzcd}
\overline{X} \arrow[d] \arrow[r] & \overline{B} \arrow[d] \\
X \arrow[r]                      & B                     
\end{tikzcd}
\end{center}
\textcolor{black}{For each automorphism of $\overline{B}$, there is an automorphism of $\overline{X}$ which preserves the commutativity of the above diagram. We then take the horizontal quotient $\overline{X}\rightarrow \overline{X}^{E}$ and observe that any automorphism of $\overline{X}$ descends to an automorphism of $\overline{X}^{E}$. The horizontal quotient amounts to gluing the vertex-spaces by identifying them along copies of domains and ranges of the partial isomorphisms. Furthermore, 
with the right choice of $\overline{B}$ (and thus of $\overline{X}$), we can ensure that $Y$ embeds in $\overline{X}^{E}$ and each partial isomorphism of $Y\subset \overline{X}^{E}$ extends to an automorphism of $\overline{X}^{E}$. This is achieved using the following theorem  by Ribes-Zalesskii \cite{RibesZalesskii93}:}

\begin{thm}[Ribes-Zalesskii \cite{RibesZalesskii93}]\label{thm:costthm}
Let $H_1,\ldots,H_m$ be finitely generated subgroups of a free group $F$. Then $H_1H_2\cdots H_m$ is closed in the profinite topology.
\end{thm}
\textcolor{black}{This generalizes a result of Hall \cite{Hall49} on finitely generated subgroups of free groups.
Theorem~\ref{thm:costthm} provides a way of choosing an appropriate regular cover $\overline{X}$ so that $\overline{X}^{E}$ has the desired properties. This, however, comes at the cost of imposing some finiteness condition on $Y$, hence the ``compactness" requirement in our results.}

\textcolor{black}{In the case of special cube complexes, we additionally wish to have $\overline{X}^{E}$ be special and the embedding $Y\rightarrow \overline{X}^{E}$ be a local isometry. To this end, we require that the collection of the partial local isometries be ``controlled''. For example, we require that each $\varphi_i$ map non-crossing hyperplanes to non-crossing hyperplanes. See Definition~\ref{defn:controlled1} and Definition~\ref{defn:controlled} for details. This is a necessary condition in order to avoid creating artificial pathologies that would make specialness fail in $\overline{X}^{E}$ regardless of the choice of the covering space $\overline{X}$.}

\textcolor{black}{In our construction, if $\overline{\Phi}^{E}_i$ is an automorphism of $\overline{X}^{E}$ that extends a partial isomorphism $\varphi_i$, and if $\varphi_i$ restricts to $\varphi_j$, then $\overline{\Phi}^{E}_i$ also extends $\varphi_j$. Thus, our construction can be made more efficient by removing any $\varphi_j\in \mathcal{O}$ that is a restriction of another $\varphi_i\in \mathcal{O}$. One drawback of this construction is the fact that the automorphisms of $\overline{X}^{E}$ extending $\varphi$ and $\varphi^{-1}$ are not necessarily inverses of each other. Similarly, a composition of partial isomorphisms does not necessarily extend to the composition of the corresponding automorphisms. One can possibly remedy this by taking further quotients of $\overline{X}^{E}$ to ensure the desired automorphisms are equal. We have not explored this avenue.}

\textcolor{black}{Several statements in this work could be generalized in various directions. For example, this construction works on nonpositively curved metric spaces. See Remark~\ref{metric}. However, we focus on compact nonpositively curved cube complexes and partial local isometries. This is arguably a natural generalization of the original statement about graphs. 
 The main results in this text are:}
 \begin{thm}\label{nthm:001}
Let $Y$ be a compact nonpositively curved cube complex and let $\mathcal{O}$ be the set of injective partial local isometries of $Y$. Then $Y$ embeds in a compact nonpositively curved cube complex $R$ where each $\varphi\in \mathcal{O}$ extends to an automorphism $\Phi\in \Aut\left(R\right)$.
\end{thm}

\begin{thm}\label{thm:0002}
Let $Y$ be a compact special cube complex and let $\mathcal{O}$ be a controlled collection of injective partial local isometries of $Y$. Then  there exists a compact special cube complex $R$ containing $Y$ as a locally convex subcomplex such that each $\varphi\in \mathcal{O}$ extends to an automorphism $\Phi\in \Aut\left(R\right)$.
\end{thm}
 In Sections~\ref{sec:cubes} and \ref{graphs} we provide definitions and background. Section~\ref{separability} uses subgroup separability of free groups to find finite covers whose horizontal quotients have certain desired properties. In Section~\ref{sec:construction} we prove Theorem~\ref{thm:001} and Theorem~\ref{thm:002}.

\vspace{5mm}

 \textbf{Acknowledgement}: We are extremely grateful to Fr\'ed\'eric Haglund and Piotr Przytycki for their helpful feedback and corrections. We also thank the anonymous referees for many corrections and suggestions that improved the text.
 \section{Special Cube Complexes}\label{sec:cubes}
\subsection{Cube Complexes}
An $n$-\textit{cube} is a copy of $I^n$ where $I=\left[-1,1\right]\subset \mathbb{R}$ and $n\geq0$. Its faces are restrictions of some coordinates to $-1$ or $1$. A \textit{cube complex} is a cell complex built from cubes glued together along their faces. The dimension of a cube complex is the supremum of the dimensions of the cubes contained in it. \\
Let $v=\left(\epsilon_i\right)_{i=1}^n$ be a vertex of $I^n$; so each $\epsilon_i=\pm 1$. The $v$-\textit{corner} of $I^n$ is the simplex spanned by $\left\{w_j\right\}_{j=1}^{n}$ where each $w_j$ is obtained from $v$ by replacing $\epsilon_j$ by $\dfrac{\epsilon_j}{2}$.\\
Let $X$ be a cube complex and $C\subset X$ be the image of a map $I^n\rightarrow X$. An $x$-\textit{corner} of $C$ for $x\in X^0$ is the union of images of $v$-corners of $I^n$ where $v\mapsto x$.

In general, if $J=\displaystyle\prod_{i=1}^n\epsilon_i$ is an $m$-dimensional subcube of $I^n$ where $$\epsilon_i\in \Bigg\{\left\{-1\right\},\left\{1\right\},\left[-1,1\right]\Bigg\}$$ then the $J$-\textit{corner} of $I^n$ is the simplex spanned by the points $\left\{w_j\right\}_{j=1}^{n-m}$ obtained from $J$ as follows:

Given the \textit{center of mass} of $J$, denoted by $v=\left(t_k\right)_{k=1}^n$ where

\[   t_k=\left\{
\begin{array}{ll}
      0\quad  \text{if}\quad \epsilon_k=\left[-1,1\right] \\
      1\quad \text{if}\quad \epsilon_k=\left\{1\right\}\\
      -1\quad \text{if}\quad \epsilon_k=\left\{-1\right\}\\
\end{array},
\right. \]
the point $w_j$ is obtained from $v$ by replacing the $j^{\text{th}}$ nonzero coordinate $t$ with $\dfrac{t}{2}$. Note that each point $w_j\in \left\{w_j\right\}_{j=1}^{n-m}$ corresponds to a cube containing $J$ as a codimension-$1$ subcube. 

Let $D$ be a subcube of an $n$-cube $C$ of $X$. A $D$-\textit{corner} of $C$ is the image of a $J$-corner of $I^n$ under a map $I^n\rightarrow X$, where $\left(I^n,J\right)\rightarrow\left(C,D\right)$.
The \textit{link} of $D$ in $X$, denoted by $\link_X\left(D\right)$ is the union of all $D$-corners of cubes containing $D$. Note that $\link_X\left(D\right)$ is a simplex complex and it is a subspace of $X$ but not a subcomplex. We  write $\link\left(D\right)$ instead of $\link_X\left(D\right)$ when $X$ is clear from context. A cube complex $X$ is \textit{simple} if the link of each cube in $X$ is simplicial. 
\begin{lem}\label{simple}
A cube complex $X$ is simple if the link of each cube of $X$ has no loops and no bigons.
\end{lem}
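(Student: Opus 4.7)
My plan is to show by induction on $k$ that, for every cube $D$ of $X$, every $k$-simplex of $\text{link}_X(D)$ has distinct vertices and is determined by its vertex set; together these two properties constitute simpliciality.

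The engine of the argument is a natural identification between links inside a link and links of larger cubes. For nested cubes $D \subsetneq D'$ of $X$, the $D$-corner $\sigma_{D'}$ is a simplex of $\text{link}_X(D)$ of dimension $\dim D' - \dim D - 1$. I would verify, by unraveling the $J$-corner construction, that there is a natural simplicial isomorphism
\[
\text{link}_{\text{link}_X(D)}(\sigma_{D'}) \;\cong\; \text{link}_X(D').
\]
Indeed, the simplices on the left are indexed by cubes $C$ with $D' \subsetneq C$, obtained by taking the face of the $D$-corner of $C$ opposite to $\sigma_{D'}$; the same cubes index the simplices on the right via their $D'$-corners. Dimensions and face relations are checked to correspond.

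With this correspondence in hand, the induction proceeds as follows. The base case $k=0$ is trivial. For $k=1$, \emph{no loops} forces each $1$-simplex to have two distinct endpoints, and \emph{no bigons} forces the assignment from $1$-simplices to endpoint pairs to be injective. For $k \geq 2$, suppose toward contradiction that $\sigma \neq \sigma'$ are $k$-simplices of $\text{link}_X(D)$ with the same vertex set $V$. All vertices in $V$ are distinct, else some edge of $\sigma$ would be a loop. By the inductive hypothesis at dimension $k-1$ applied to $\text{link}_X(D)$, each $(k-1)$-face of $\sigma$ coincides with the $(k-1)$-face of $\sigma'$ on the same vertex subset, so $\partial\sigma = \partial\sigma'$. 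Pick any common $(k-2)$-face $\tau$ of $\sigma$ and $\sigma'$, and let $D_\tau \supsetneq D$ be the cube to which $\tau$ corresponds. Via the correspondence $\text{link}_{\text{link}_X(D)}(\tau) \cong \text{link}_X(D_\tau)$, the simplices $\sigma$ and $\sigma'$ yield two $1$-simplices $e_\sigma, e_{\sigma'}$ of $\text{link}_X(D_\tau)$ whose endpoints are determined by $V \setminus \tau$, hence agree. The no-bigon hypothesis on $\text{link}_X(D_\tau)$ then forces $e_\sigma = e_{\sigma'}$, and since $\sigma$ is recovered from $\tau$ together with $e_\sigma$ via the correspondence, we obtain $\sigma = \sigma'$, a contradiction.

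The main obstacle I anticipate is establishing the links-in-links correspondence rigorously: one must verify that ``the face of the $D$-corner of $C$ opposite to $\sigma_{D'}$'' is well-defined on the level of simplex complexes, that it agrees under a natural bijection with ``the $D'$-corner of $C$'', and that face relations on both sides match. This is bookkeeping inside the definition of $J$-corners but needs to be done carefully. Once that identification is in place, the inductive argument above closes cleanly, with the no-loop hypothesis guarding distinctness of vertices and the no-bigon hypothesis handling the inductive conflict.
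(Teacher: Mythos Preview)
Your proposal is correct and follows essentially the same approach as the paper: both arguments pass from a pair of distinct $m$-simplices $\sigma,\sigma'$ in $\link_X(D)$ with common boundary to a bigon in $\link_X(Y)$ for the larger cube $Y$ corresponding to a common $(m-2)$-face. Your explicit links-in-links isomorphism $\link_{\link_X(D)}(\sigma_{D'})\cong\link_X(D')$ and clean induction on $k$ package more carefully what the paper does by a direct corner computation; the paper's proof is terser and leaves the inductive reduction from ``same vertex set'' to ``same boundary'' implicit, but the geometric content is identical.
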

\begin{proof}
Let $D\subset X$ be an $n$-cube. Let $\sigma_1$ and $\sigma_2$ be distinct $m$-simplices in $\link\left(D\right)$ with $\sigma_1\cap \sigma_2\neq \emptyset$ and $m\geq 1$. If $\sigma_1$ is not embedded then $\link\left(D\right)$ has a loop. If $\partial\sigma_1=\partial\sigma_2$, then there exists an $\left(n+m-1\right)$-cube $Y\supset D$ such that $\link\left(Y\right)$ contains a bigon. Indeed, the case $m=1$
corresponds to $Y=D$ with a bigon in $\link\left(D\right)$. For $m\geq 2$, the $m$-simplices $\sigma_1$ and $\sigma_2$ are $D$-corners of distinct $\left(n+m+1\right)$-cubes $A_1$ and $A_2$ intersecting along their faces. An $\left(m-2\right)$-simplex $\Delta\subset \sigma_1\cap\sigma_2$ is a $D$-corner of an $\left(n+m-1\right)$-cube $Y\supset D$. Moreover, two distinct $\left(m-1\right)$-simplices containing $\Delta$ are $D$-corners of distinct $\left(n+m\right)$-cubes $B\supset Y$ and $B'\supset Y$ that are shared faces of $A_1$ and $A_2$. We can see that the $Y$-corners of $B$ and $B'$ are $0$-simplices that are boundaries of the $1$-simplices corresponding to the $Y$-corners of $A_1$ and $A_2$.
\end{proof}
\subsection{Nonpositive curvature}
A simple cube complex $X$ is nonpositively curved if it satisfies Gromov's no-$\triangle$ property \cite{Gromov87}, which requires that $3$-cycles in $\link\left(D\right)$ bound $2$-simplices for each cube $D\subset X$. An equivalent criterion for nonpositive curvature states  that a cube complex is nonpositively curved if the links of its $0$-cubes are flag. A simplicial complex is \textit{flag} if any collection of $\left(n+1\right)$ pairwise adjacent $0$-simplices spans an $n$-simplex. 
\subsection{Local Isometries}\label{Local Isometries} A subcomplex $K$ of a simplicial complex $L$ is \textit{full} if any simplex of $L$ whose $0$-simplices lie in $K$ is itself in $K$. A  subcubecomplex $A\subset B$ is \textit{locally convex} if $\link_A\left(x\right)\subset \link_B\left(x\right)$ is a full subcomplex for every $0$-cube $x\in A$.

A map $X\rightarrow Y$ of cube complexes is \textit{combinatorial} if open cells are mapped homeomorphically to open cells, \textcolor{black}{where each homeomorphism is an isometry}. It is \textit{cubical} if \textcolor{black}{for each $0\leq k\leq \dim\left(X\right)$}, the $k$-skeleton of $X$ is mapped to the $k$-skeleton of $Y$. A combinatorial map $ \Phi: X\rightarrow Y$ is an \textit{immersion} if the restriction $\link\left(x\right)\rightarrow \link\left(\Phi\left(x\right)\right)$ is an embedding for each $0$-cube $x\in X$. \textcolor{black}{If $X$ and $Y$ are nonpositively curved and} $\link\left(x\right)$ embeds as a full subcomplex of $\link\left(\Phi\left(x\right)\right)$ then $\Phi$ is a \textit{local isometry}. \textcolor{black}{Equivalently}, a combinatorial locally injective map $\Phi: X\rightarrow Y$ of nonpositively curved cube complexes is a local isometry if \textcolor{black}{$\Phi$} has \textit{no missing squares} in the sense that if two $1$-cubes $a_1, a_2$ at a $0$-cube $x$ map to $\Phi\left(a_1\right), \Phi\left(a_2\right)$ that bound the corner of a $2$-cube at $\Phi\left(x\right)$, then $a_1, a_2$ already bound the corner of a $2$-cube at $x$. \textcolor{black}{Note that when $\Phi$ is an injective local isometry, $\Phi\left(X\right)$ embeds as a locally convex subcomplex of $Y$}.
\subsection{Immersed Hyperplanes}A \textit{midcube} of an $n$-cube is the subspace obtained by restricting one coordinate to $0$. Note that a midcube of an $n$-cube is isometric to an $\left(n-1\right)$-cube. An \textit{immersed hyperplane} \textcolor{black}{$H$} in a nonpositively curved cube complex $X$ is a component of the cube complex $\bigslant{M}{\sim}$  where $M$ denotes the disjoint union of midcubes of $X$ and $\sim$ is the equivalence relation induced by identifying faces of midcubes under the inclusion map into $X$.  A $1$-cube of $X$ is \textit{dual} to $H$ if its midcube is in $H$. 
\textcolor{black}{We note that the edges dual to $H$ form an equivalence class generated by \textit{elementary parallelisms} of $1$-cubes, where two $1$-cubes are \textit{elementary parallel} if they appear on opposite sides of a $2$-cube}. \textcolor{black}{The \textit{carrier} of $H$, denoted by $N\left(H\right)$, is the cubical neighborhood of $H$ formed by the union of the closed cubes whose intersection with $H$ is nonempty.}

\subsection{Special Cube Complexes}\label{sec:special} An immersed hyperplane $H$ in $X$ \textit{self-crosses} if it contains two distinct midcubes from the same cube. It is \textit{two-sided} if the combinatorial immersion $H\rightarrow X$ extends to $H\times I\rightarrow X$. In this case, the $1$-cubes dual to $H$ can be oriented in such a way that any two dual $1$-cubes lying in the same $2$-cube are oriented in the same direction. An immersed hyperplane that is not two-sided is \textit{one-sided}. $H$ \textit{self-osculates} if it is dual to two oriented $1$-cubes that share the same initial or terminal $0$-cube \textcolor{black}{and do not form a corner of a $2$-cube}. Two \textcolor{black}{distinct} immersed hyperplanes, $H, H'$, \textit{cross} if they contain distinct midcubes of the same cube. They \textit{osculate} if they are dual to two $1$-cubes that share a $0$-cube \textcolor{black}{and do not form a corner of a $2$-cube}. Two \textcolor{black}{distinct} immersed hyperplanes \textit{inter-osculate} if they both cross and osculate. See Figure~\ref{fig:patho}.
\begin{figure}[t]\centering
\includegraphics[width=.7\textwidth]{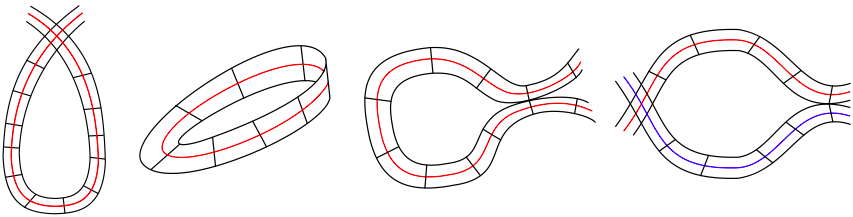}
\caption[Self-crossing, one-sidedness, self-osculation, and inter-osculation.]{\label{fig:patho}
From left to right: Self-crossing, one-sidedness, self-osculation, and inter-osculation.}
\end{figure}
A nonpositively curved cube complex is \textit{special} if it satisfies the following:
\begin{enumerate}
\item No immersed hyperplane self-crosses;
\item No immersed hyperplane is one-sided;
\item No immersed hyperplane self-osculates;
\item No two immersed hyperplanes inter-osculate.
\end{enumerate}

\section{Horizontal Quotient of a Graph of Spaces}\label{graphs}
\subsection{Graph of Spaces} An \textit{undirected graph} $\Gamma\left(V,E\right)$ is a $1$-dimensional $CW$-complex whose \textit{vertices} and \textit{edges}, denoted by  $V=\Gamma^0$ and  $E=\Gamma^1$, are the $0$-cells and open $1$-cells, respectively. There exist two \textit{incidence} maps $\tau_1, \tau_2 : E\rightarrow V$ mapping each edge $e\in E$ to its \textit{boundary vertices}, $\tau_1\left(e\right),\ \tau_2\left(e\right)$ called initial and terminal vertex, respectively. A \textit{graph of spaces} $X$ with underlying graph $\Gamma\left(V,E\right)$, \textit{vertex-spaces} $\left\{X_v\right\}_{v\in V}$, and \textit{\textcolor{black}{thick} edge-spaces} $\left\{X_e\textcolor{black}{\times I}\right\}_{e\in E}$ is a topological space $X$  obtained as a quotient of $\left\{X_v\right\}_{v\in V}$ and $\left\{X_e\textcolor{black}{\times I}\right\}_{e\in E}$ in the following manner: for each edge $e\in E$ with boundary vertices $v_1=\tau_1\left(e\right), v_2=\tau_2\left(e\right)$, the corresponding \textcolor{black}{thick} edge-space $X_e\times I$ is attached to the vertex-spaces $X_{v_1}, X_{v_2}$ via \textit{attaching maps} which are also denoted by $\tau_1: X_e\times \left\{-1\right\}\rightarrow X_{v_1}$ and $\tau_2: X_e\times \left\{1\right\}\rightarrow X_{v_2}$. For simplicity, the isomorphic subspaces $X_e\times \left\{-1\right\}\subset X_{v_1}$ and $X_e\times \left\{1\right\}\subset X_{v_2}$ are referred to as \textit{edge-spaces} of $X_{v_1}$ and $X_{v_2}$, respectively. In this text, we always assume \textcolor{black}{$X_e$ is connected} and the attaching maps of \textcolor{black}{$X_e\times I$} are injective \textcolor{black}{(and cubical when $X$ is a graph of cube complexes)}. The graph  $\Gamma\left(V,E\right)$ is the quotient of $X$ obtained by mapping $X_v$ to $v$ and $X_e\times \left(-1,1\right)$ to $e$ for each $v\in V$ and $e\in E$. We will henceforth denote a graph of spaces $X$ with underlying graph $\Gamma_X$ by the corresponding canonical quotient map $X\rightarrow \Gamma_X$. 
\subsection{Horizontal Quotient}Let $X\rightarrow \Gamma_X$ be a graph of spaces \textcolor{black}{and let $E$ be the edge set of $\Gamma_X$}. Given an edge $e\in E$, let $\sim_e$ be the equivalence relation on $X_e\times I$ where for all $s,t \in I=\left[-1,1\right]$, we have $\left(x,t\right)\sim_e \left(y,s\right)$ if and only if $x=y$. Let $X^{e}=X/\sim_e$ be the corresponding quotient. The \textit{horizontal quotient} of $X$ along the edge $e$, denoted by $q_e: X\rightarrow X^e$, is the quotient map $X\rightarrow X^{e}=X/\sim_e$. In general, if $E'=\left\{e_1,\ldots,e_n\right\}\subset E$, then
the horizontal quotient of $X$ along $E'$ is \textcolor{black}{the quotient $X\rightarrow X^{E'}=X/\sim_{E'}$ where $\sim_{E'}$ is the equivalence relation spanned by \textcolor{black}{the union of all the relations} $\sim_e$ for $e\in E'$}. When $E'=E$, we call $X^{E}$ \textit{the seamless graph of spaces} associated with $X$ and the corresponding map is the \textit{horizontal quotient} which we denote by $q:X\rightarrow X^{E}$. (This terminology was introduced in \cite{HaglundWiseVHification}). \textcolor{black}{Note that the letter $E$ in $X^{E}$ is generic in the sense that it refers to the set of all edges of a given graph. For example, given two graphs of spaces $X\rightarrow \Gamma_X$ and $Y\rightarrow \Gamma_Y$, their horizontal quotients will both be denoted by $X^{E}$ and $Y^{E}$, respectively, even when $\Gamma_X\neq \Gamma_Y$}. The horizontal quotient $q$ is \textit{strict} if the restriction of $q$ to each vertex-space is an embedding. The E-\textit{parallelism class} of a subset $A\subset X$ is $q^{-1}\left(q\left(A\right)\right)$, that is, the set of all points of $X$ mapping to $q\left(A\right)$. When $A$ is a \textcolor{black}{point}, $q^{-1}\left(q\left(A\right)\right)$ is the \textit{horizontal graph} associated to $A$. Note that the restriction of the map $X\rightarrow \Gamma_X$ to a horizontal graph in $X$ is an immersion since the attaching maps of \textcolor{black}{thick} edge-spaces are embeddings. \textcolor{black}{In particular, if $X\rightarrow \Gamma_X$ is a tree of spaces, then $q$ is strict since the horizontal graphs are trees that intersect each vertex-space of $X$ in at most one point.} When $X$ is a graph of cube complexes, an $n$-cube $C\subset X$ is \textit{vertical} if $q\left(C\right)$ is also an $n$-cube.

\begin{rem}\textcolor{black} {In the case of a graph of cube complexes $X$, we make the following remarks: 
\begin{enumerate}
\item The quotient $X^{E}$ is not necessarily a cube complex as cubes of $X$ may be quotiented to simplices in $X^{E}$.  
\item When $q$ is strict, the restriction of $q$ to each cube $\left(C\times I\right)\subset \left(X_e\times I\right)$, where $C\subset X_e$ corresponds to the orthogonal projection (along $I$) onto $C$. Then $q$ is cubical and $X^{E}$ is a cube complex. Moreover, $q$ preserves the orientation of edges in $X_e$.  
\item When $X$ is nonpositively curved, the horizontal quotient $X^{E}$ is not necessarily nonpositively curved. See Figure~\ref{pathologies11}.
\begin{figure}[t]\centering
\includegraphics[width=.7\textwidth]{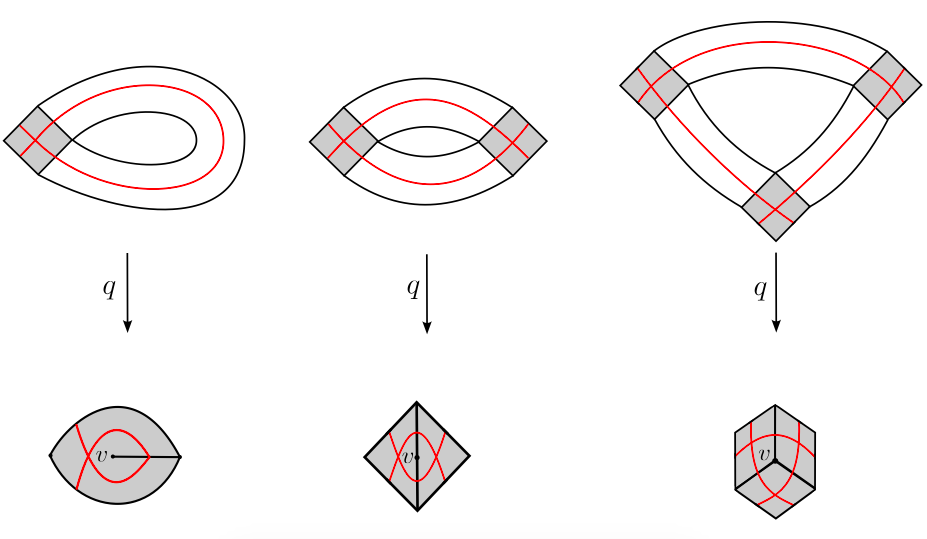}
\caption[]{\label{pathologies11}
The horizontal quotients of these graphs of cube complexes fail the link condition for nonpositive curvature. Indeed, from bottom left to bottom right: $\link\left(v\right)$ is a circle, a bigon, and a $3$-cycle that does not bound a $2$-simplex. In each case, $\link\left(v\right)$ is not flag.}
\end{figure} 
  \end{enumerate} } 
\end{rem}

\begin{lem}\label{lem:003}
Let $X\rightarrow \Gamma_{X}$ be a graph of cube complexes with a strict horizontal quotient. Then
for each immersed hyperplane $U\xrightarrow{f} X^{E}$, there exists an immersed hyperplane $V\xrightarrow{g} X$, with $f\left(U\right)=\left(q\circ g\right)\left(V\right)$. Furthermore, 
\begin{enumerate}
\item if $V$ is two-sided then so is $U$;
\item if $U\xrightarrow{f} X^{E}$ self-crosses, then \textcolor{black}{$V\xrightarrow{g} X$} self-crosses.
\end{enumerate}
Consequently, if the hyperplanes of $X$ are two-sided/embedded then so are the hyperplanes in $X^{E}$.
\end{lem}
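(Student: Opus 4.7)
The plan is to construct $V$ by lifting a single $1$-cube dual to $U$, then verify each assertion by tracing chains of face identifications through the horizontal quotient. The key preliminary is to classify the midcubes of $X$. Each is either a \emph{horizontal} midcube (a midcube of a cube in a vertex space, or one of the form $m\times I\subset c\times I$ in an edge cylinder $X_e\times I$) or a \emph{vertical} midcube of the form $c\times\{0\}\subset c\times I$. Under $q$, horizontal midcubes project to midcubes of $X^E$, while vertical midcubes collapse to full cubes. The codimension-one faces of a horizontal midcube are horizontal, and those of a vertical midcube are vertical, so every immersed hyperplane of $X$ is entirely horizontal or entirely vertical. To construct $V$, pick a $1$-cube $e\subset X^E$ dual to $U$; by strictness, $e$ lifts to a $1$-cube $\tilde e$ in some vertex space of $X$. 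Let $\tilde\mu_0$ be the midpoint of $\tilde e$, set $\mu_0=q(\tilde\mu_0)$, and let $V$ be the immersed hyperplane of $X$ through $\tilde\mu_0$, which is horizontal.

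To establish $f(U)=(q\circ g)(V)$, I would trace chains of face identifications. Every midcube of $U$ connects to $\mu_0$ by a finite chain in $U$, each step governed by a cube $C$ of $X^E$. If $C$ does not lie in a shared edge subspace then $C$ has a unique lift in $X$, so the step lifts at once. If $C\subset X_e$, then $C$ has three lifts in $X$: $C\subset X_{v_1}$, $C\subset X_{v_2}$, and $C\times I$ in the edge cylinder. For any midcube $\nu$ of $C$, the horizontal midcube $\nu\times I\subset C\times I$ has $\nu\times\{-1\}$ and $\nu\times\{1\}$ as codimension-one faces, which are identified with the two vertex-space copies of $\nu$; this places all three lifts of $\nu$ in the same hyperplane, so the current lift can be slid between vertex spaces without leaving $V$ before taking the step. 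Iterating yields $f(U)\subseteq(q\circ g)(V)$, and the reverse inclusion is immediate since every midcube of $V$ is horizontal and its $q$-image chains back to $\mu_0$ in $X^E$, hence lies in $U$.

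For (1), assume $V$ is two-sided and pull back its dual-$1$-cube orientations along vertex-space lifts of the dual $1$-cubes of $U$. Two vertex-space lifts $\tilde e_1,\tilde e_2$ of an edge $e\subset X_e$ are parallel sides of the $2$-cube $e\times I\subset X$, so their orientations project to a common orientation on $e$; parallelism inside each $2$-cube of $X^E$ follows by lifting that $2$-cube to a vertex space. For (2), suppose $U$ self-crosses at distinct midcubes $\mu_1,\mu_2$ of a common cube $C\subset X^E$, and lift these to $\tilde\mu_1,\tilde\mu_2\in V$. If $C$ lifts uniquely, the two lifts are distinct midcubes of that lift, so $V$ self-crosses; otherwise $C\subset X_e$, and sliding one lift across $C\times I$ as above brings both into the same vertex-space copy of $C$, again exhibiting a self-crossing. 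The ``consequently'' clause then follows by applying (1) and (2) to every such pair $(U,V)$. The main obstacle throughout is handling the multiple lifts of cubes in shared edge subspaces; the observation that rescues each step is that $V$, being horizontal, automatically contains the connecting midcubes $\nu\times I$ in the edge cylinders, letting me navigate freely between vertex-space copies of any cube.
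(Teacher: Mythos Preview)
Your proof is correct and follows essentially the same approach as the paper: both hinge on the observation that the preimage of a midcube of $X^{E}$ is connected through the ``horizontal'' midcubes $\nu\times I$ in the edge cylinders (what the paper phrases as $q^{-1}(m)\cong m\times\Delta$ for a connected graph $\Delta$). The only cosmetic difference is that you pick a basepoint and chase chains of face identifications, whereas the paper works directly with the full preimage $q^{-1}(f(U))$ and shows it equals $g(V)$; the arguments for two-sidedness and self-crossing are likewise the same in substance.
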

\begin{proof}
Since $q$ is strict, it is cubical and so $X^{E}$ is a cube complex. Let $U\xrightarrow{f}X^{E}$ be an immersed hyperplane. Then the parallelism class of $1$-cubes dual to $U$ lifts to a parallelism class of $1$-cubes in $X$. The latter corresponds to an immersed hyperplane $V\xrightarrow{g}X$ that quotients onto $U$, and so $f\left(U\right)=\left(q\circ g\right)\left(V\right)$.  

Now suppose $V\xrightarrow {g}X$ is a two-sided immersed hyperplane. If $g\left(V\right)\subset X_{v}$ for some vertex-space $X_v$, then $q\left(g\left(V\right)\right)$ is two-sided since $q$ is a strict horizontal quotient and thus restricts to an embedding on each vertex-space.  If on the other hand, $g\left(V\right)$ has nonempty intersection with some edge-space $X_e\times I$ attached to vertex-spaces $X_{v_1}, X_{v_2}$, then there exist vertical $1$-cubes $A_1\in X_{v_1}$ and $A_2\in X_{v_2}$ dual to $g\left(V\right)$ that lie on opposite sides of  a $2$-cube $B\subset X_e\times I$. Since $V$ is two-sided, there is a consistent way of orienting $A_1$ and $A_2$ so that their initial points lie on the same $1$-cube of $B$. Taking the horizontal quotient along the edge-space $X_e\times I$,  induces an orientation on $q\left(A_1\right)=q\left(A_2\right)$ consistent with the orientation of the vertical $1$-cubes of $q\left(g\left(V\right)\right)$. By taking consecutive quotients along all the edge-spaces intersecting $g\left(V\right)$, the two-sidedness is preserved at each stage and the claim follows.

Finally, suppose $U\xrightarrow{f} \textcolor{black}{X^{E}}$ is not injective. Then there exists a $2$-cube $S\subset \textcolor{black}{X^{E}}$ where $f\left(U\right)$ self-crosses. The preimage of $S$ contains a $2$-cube where the immersed hyperplane $g\left(V\right)$ self-crosses.
\end{proof}
\begin{rem}\label{rem:001}
\textcolor{black}{Let $X\rightarrow \Gamma_X$ be a graph of cube complexes and let $q:X\rightarrow X^{E}$ be the horizontal quotient. Let $V\xrightarrow{g}X$ be an immersed hyperplane. Then}  $\left(q\circ g\right)\left(V\right)$ is not necessarily the image of an immersed hyperplane in $X^{E}$. \textcolor{black}{Indeed,} not all midcubes of $X$ map to midcubes of $X^{E}$. In particular, \textcolor{black}{each immersed hyperplane $g\left(V\right)=X_e\times \left\{0\right\}\subset \left(X_e\times [-1,1]\right)$ projects to a subcomplex $q\left(g\left(V\right)\right)\subset X^{E}$ that is not a hyperplane}.
\end{rem}

\begin{defn}\label{defn:001} Let $X\rightarrow \Gamma_{X}$ be a graph of cube complexes and $q: X\rightarrow X^{E}$ be the horizontal quotient. \textcolor{black}{Let $x\in X^{E}$ be a $0$-cube and let $q^{-1}\left(x\right)$ be the corresponding horizontal graph. Let $\Gamma_0\subset \Gamma_X$ be the image of $q^{-1}\left(x\right)$ under the quotient $X\rightarrow \Gamma_X$. Let $V_0$ and $E_0$ be the vertices and edges of $\Gamma_0$ and let $\left\{X_v\ :\ v\in V_0\right\}$ and $\left\{X_e\times I\ :\ e\in E_0\right\}$ be the corresponding vertex-spaces and thick edge-spaces in $X$, respectively. Let $\left\{x_1,\ldots\right\}$ be the $0$-cubes of $q^{-1}\left(x\right)$. \textit{The induced graph of links} of $x$ is the graph of spaces $Y\subset X$ with underlying graph $q^{-1}\left(x\right)$, whose vertex-spaces are $\link_{X_{v_i}}\left(x_i\right)$ and whose} \textcolor{black}{thick edge-spaces are $\left(\link_{X_{e_{ij}}}\left(x_i\right)\times I\right)$, where $X_{v_i}\in \left\{X_v\ :\ v\in V_0\right\}$ is the vertex-space containing $x_i$ and $X_{e_{ij}}\times I\in \left\{X_e\times I\ :\ e\in E_0\right\}$ are the thick edge-spaces containing $x_i$}. Note that taking the quotient $X\rightarrow X^{E}$ induces a quotient $Y\rightarrow Y^{E}$ where $\link_{X^{E}}\left(x\right)=Y^{E}$.
\end{defn}
\begin{rem}\label{rem:002}
When the edge-spaces of $X$ are embedded locally convex subcomplexes, the edge-spaces of an induced graph of links are embedded full subcomplexes. However, the vertex-spaces of an induced graph of links are not necessarily connected. 
\end{rem}
\begin{lem}\label{lem:004}
Let $Y=A\displaystyle\cup_{_C}B$ where $A, B$ are simplicial complexes and $C$ embeds as a full subcomplex in $A$ and $B$. Then $Y$ is simplicial and $A$ embeds as a full subcomplex of $Y$.
\end{lem}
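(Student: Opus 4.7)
The plan is to analyze the simplices of $Y$ directly in terms of those of $A$ and $B$, and then verify the two defining properties: that each simplex is determined by its vertex set and that the intersection of any two simplices is a common face.

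First, I would observe that since the gluing of $A$ and $B$ is performed along their common subcomplex $C$ (via the identity), no new simplices are introduced: every simplex of $Y$ is the image of a simplex of $A$ or a simplex of $B$, and the images of a simplex $\sigma_A\subset A$ and $\sigma_B\subset B$ coincide precisely when both equal the same simplex of $C$. Thus the vertex set of $Y$ is $A^0 \cup_{C^0} B^0$, and a simplex of $Y$ is uniquely determined by its vertices provided we can handle the case of a simplex straddling the interface, which is exactly what the fullness hypothesis rules out, since any simplex with vertices in $A^0\cap B^0 = C^0$ lies in $C$.

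Next, I would verify the intersection property. For two simplices both in $A$ or both in $B$ this is immediate. The key case is a simplex $\sigma_A\subset A$ and a simplex $\sigma_B\subset B$. Their intersection lies inside $A\cap B = C$, so $\sigma_A \cap \sigma_B = (\sigma_A \cap C) \cap (\sigma_B \cap C)$. Since $C$ is full in $A$, the intersection $\sigma_A \cap C$ is the face of $\sigma_A$ spanned by those vertices of $\sigma_A$ lying in $C$; similarly for $\sigma_B\cap C$. Both are simplices of $C$, so their intersection in the simplicial complex $C$ is a common face of each, and hence is a common face of $\sigma_A$ and $\sigma_B$ in $Y$. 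This establishes that $Y$ is simplicial.

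Finally, for the fullness claim, I would take any simplex $\tau$ of $Y$ whose vertices all lie in $A$. By the above description, $\tau$ is a simplex of $A$ or of $B$. If $\tau\subset A$ we are done; if $\tau\subset B$, then every vertex of $\tau$ lies in $A^0\cap B^0 = C^0$, so by fullness of $C$ in $B$ we get $\tau\subset C\subset A$. Thus $A$ is full in $Y$. The main subtlety throughout is simply to use the two fullness hypotheses in the right direction — to collapse the intersections of mixed simplex pairs into honest faces of $C$ — but once this is set up, the verification is routine and no further ingredient is needed.
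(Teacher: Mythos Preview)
Your proposal is correct and follows essentially the same approach as the paper's proof: both reduce to the mixed case $\sigma_A\subset A$, $\sigma_B\subset B$ and use fullness of $C$ in $A$ and $B$ to conclude that the intersection is a single simplex of $C$, and both handle fullness of $A$ in $Y$ by the identical argument that a simplex of $B$ with vertices in $A$ has vertices in $C$ and hence lies in $C$. Your version is slightly more explicit in first identifying $\sigma_A\cap C$ and $\sigma_B\cap C$ as faces before intersecting them, and in separately checking that simplices are determined by their vertex sets, but these are presentational differences rather than a different route.
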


\begin{proof}
We show the nonempty intersection of two simplices is a simplex. Let $\sigma_1, \sigma_2\subset Y$ be simplices with $\sigma_1\cap \sigma_2\neq \emptyset$. Each simplex of $Y$ is either in $A$ or in $B$. Suppose $\sigma_1\subset A,\ \sigma_2\subset B$ with $\sigma_1\not\subset B$ and $\sigma_2\not\subset A$. Let $Z$ be the set of  $0$-simplices of $\sigma_1\cap \sigma_2$ and note that $Z\subset C$. Then $Z$ spans simplices $\delta_1\subset A$ and $\delta_2\subset B$. Since $C$ is full in $A$ and $B$, we see that $\delta_1$ and $\delta_2$ are the same simplex of $C$. That is, $\sigma_1\cap \sigma_2$ is a simplex.

To show $A\hookrightarrow Y$ is full, we show that whenever a set of $0$-simplices $S\subset A$ spans a simplex $\Delta$, we have $\Delta \subset A$. Indeed, suppose $\Delta\subset B$, then $S\subset C$. But $C$ is full in $B$ and so $\Delta\subset C\subset A$.
\end{proof}

\begin{lem}\label{lem:005}
Let $Y=A\displaystyle\cup_{_C}B$ where $A, B$ are flag complexes and $C$ embeds as a full subcomplex in $A$ and $B$. Then $Y$ is flag and $A$ embeds as a full subcomplex of $Y$.
\end{lem}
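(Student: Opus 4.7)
The plan is to leverage Lemma~\ref{lem:004} to reduce the flagness check to a case analysis based on which side of the pushout the vertices lie on. Lemma~\ref{lem:004} already yields that $Y$ is simplicial and that both $A \hookrightarrow Y$ and (by symmetry) $B \hookrightarrow Y$ are full subcomplex embeddings, so the second conclusion of the statement is free. It remains to show that any collection of pairwise adjacent $0$-simplices in $Y$ spans a simplex of $Y$.

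Let $S = \{v_0, \ldots, v_n\}$ be a set of pairwise adjacent $0$-simplices of $Y$. The key preliminary claim is that $S \subset A$ or $S \subset B$. Suppose for contradiction that there exist $v \in S \cap (A \setminus B)$ and $w \in S \cap (B \setminus A)$. By hypothesis, $\{v,w\}$ spans a $1$-simplex of $Y = A \cup_C B$, so this edge lies entirely in $A$ or entirely in $B$. In the first case $w \in A$, and in the second case $v \in B$, each contradicting our choice. Hence every pairwise adjacent set is contained in one side of the pushout.

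Without loss of generality, $S \subset A$. Since $A$ is a full subcomplex of $Y$ (by Lemma~\ref{lem:004}), the edge of $Y$ spanned by any pair $v_i, v_j \in S$ lies in $A$. Thus $S$ is a pairwise adjacent set in the flag complex $A$, and so it spans a simplex $\sigma \subset A \subset Y$, as required.

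The argument is essentially routine once Lemma~\ref{lem:004} is in hand; the only step that requires any care is the dichotomy $S \subset A$ or $S \subset B$, which is the mild obstacle. This uses crucially the fact that an edge of $Y$ with one endpoint in $A \setminus B$ and the other in $B \setminus A$ cannot exist, a feature of the pushout $A \cup_C B$ along a common subcomplex rather than a more general identification.
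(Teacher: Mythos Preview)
Your proof is correct and follows essentially the same approach as the paper: show that any clique of $Y$ lies entirely in $A$ or entirely in $B$, then invoke flagness of that side. Your version is slightly more streamlined in that you invoke the fullness conclusion of Lemma~\ref{lem:004} directly to pull the edges into $A$, whereas the paper re-derives this step using fullness of $C$ and an induction on clique size; but the core idea is the same.
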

\begin{proof}
$Y$ is simplicial by Lemma~\ref{lem:004}. To show flagness, let $K\subset Y$ be an $n$-clique. We claim that $K\subset A$ or $K\subset B$. We proceed by induction on $n$. The base case $n=0$ is trivial. Suppose the claim holds for all cliques of size $\leq n$ and let $K$ be an $\left(n+1\right)$-clique. By induction, every proper subclique of $K$ lies in either $A$ or $B$. Without loss of generality, let $\sigma_1\in K^0$ be a $0$-simplex with $\sigma_1\notin A$. Then $\sigma_1\in B$ and for any $0$-simplex $\sigma_2\in K^0$, the $1$-simplex $\sigma_1\sigma_2$ lies in $B$. Indeed, if $\sigma_1\sigma_2$ lies in $A$, then $\sigma_1$ lies in $A$ which is a contradiction. Therefore, $\sigma_2\in B$ and so $K^0\subset B$. Moreover, given $0$-simplices $\sigma_2$ and $\sigma_3$ in $K^0$, the $1$-simplex $\sigma_2\sigma_3$ lies in $B$. To see this, suppose $\sigma_2\sigma_3 \in A$. Then and $\sigma_2$ and $\sigma_3$ lie in $A\cap B=C$. But $C$ is full in $A$ and so $\sigma_2\sigma_3\in C\subset B$. Since $B$ is flag, $K$ bounds a simplex.

Let $K\subset Y$ be a clique such that $K^0\subset A$. Then by the previous part, $K^1\subset A$ and it spans a simplex $\Delta\subset A$. Hence $A$ embeds as a full subcomplex of $Y$.
\end{proof}
\begin{lem}\label{lem:006}
Let $Y$ be a tree of spaces where each vertex-space is a flag complex and each edge-space  embeds as a full subcomplex in its vertex-space. Then $Y^{E}$ is flag.
\end{lem}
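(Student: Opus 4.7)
My plan is to prove this by induction on the number of edges in the tree $\Gamma_Y$, using Lemma~\ref{lem:005} as the gluing engine at each step. The natural single statement to prove by induction is actually the following strengthening: \emph{$Y^E$ is flag, and every vertex space $Y_v$ of $Y$ embeds as a full subcomplex of $Y^E$.} The strengthening is needed because Lemma~\ref{lem:005} demands fullness of the common subcomplex $C$ in \emph{both} pieces, and without tracking fullness of vertex spaces in the intermediate horizontal quotients the induction will not close.

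\textbf{Base case.} If $\Gamma_Y$ has no edges, then $Y = Y^E$ is a single vertex space, which is flag by hypothesis, and the unique vertex space is trivially a full subcomplex.

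\textbf{Inductive step.} Suppose $\Gamma_Y$ has $n \ge 1$ edges. Pick a leaf vertex $v$ of $\Gamma_Y$ and let $e$ be its unique incident edge, with other endpoint $w$. Let $Y' \to \Gamma_{Y'}$ be the tree of spaces obtained by deleting $v$ and $e$, so $\Gamma_{Y'}$ has $n-1$ edges and still satisfies the hypotheses of the lemma. The horizontal quotient decomposes as
\[
Y^E \;=\; (Y')^E \cup_{Y_e} Y_v,
\]
where $Y_e$ is viewed inside $Y_w \subset (Y')^E$ on one side and inside $Y_v$ on the other. By hypothesis, $Y_e$ is full in $Y_v$. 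On the other side, $Y_e$ is full in $Y_w$ by hypothesis, and $Y_w$ is full in $(Y')^E$ by the inductive hypothesis; fullness is transitive, so $Y_e$ is full in $(Y')^E$. Hence Lemma~\ref{lem:005} applies and yields that $Y^E$ is flag and that $(Y')^E$ (and equally $Y_v$, by applying the lemma with the roles of the two pieces swapped) embeds as a full subcomplex of $Y^E$.

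It remains to check the fullness clause of the strengthened statement for every vertex space, not just for $Y_v$. For any other vertex space $Y_u$ with $u \ne v$, the inductive hypothesis gives that $Y_u$ is full in $(Y')^E$, and we just showed $(Y')^E$ is full in $Y^E$; transitivity of fullness finishes the argument. The only real content is the fullness transitivity chain $Y_e \subset Y_w \subset (Y')^E$, and I expect this to be the point most likely to slip — one has to make sure the inductive hypothesis is indeed carrying fullness of vertex spaces across horizontal quotients, not merely flagness of the ambient complex. Since we are in a tree, no cycle forces two different copies of an edge space to be identified, so the pushout description of $Y^E$ above is valid; this is where the tree hypothesis is used.
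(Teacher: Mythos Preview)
Your proof is correct and follows essentially the same approach as the paper's: induction via Lemma~\ref{lem:005}, using transitivity of fullness (``a full subcomplex of a full subcomplex is full'') to verify that the edge space is full in the larger piece at each step. The only omission is that your induction on the number of edges presupposes $\Gamma_Y$ is finite; the paper first observes that any failure of flagness is witnessed in the horizontal quotient of a finite subtree, reducing to the finite case before running the same induction.
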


\begin{proof}
Any failure of flagness arises in a quotient of a finite subtree. Therefore, it suffices to prove the claim for finite trees. This follows by induction from Lemma~\ref{lem:005}. Note that a full subcomplex of a full subcomplex is full.
\end{proof}

\begin{cor}\label{cor:001}
Let $\widehat{X}\rightarrow \Gamma_{\widehat{X}}$ be a tree of nonpositively curved cube complexes where the attaching maps of edge-spaces are injective local isometries. Then $\widehat{X}^{E}$ is nonpositively curved.
\end{cor}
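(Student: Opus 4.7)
My plan is to establish nonpositive curvature by checking the flag condition on the link of each $0$-cube of $\widehat{X}^E$. First I would verify that the horizontal quotient $q\colon \widehat{X} \to \widehat{X}^E$ is strict and equips $\widehat{X}^E$ with a cube complex structure: the tree hypothesis on $\Gamma_{\widehat{X}}$ rules out loop edges, and since the attaching maps of edge spaces are injective, no pair of distinct points in a single vertex space lies in the same $E$-parallelism class, so $q$ embeds each $X_v$ into $\widehat{X}^E$.

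Fix a $0$-cube $q(x_0) \in \widehat{X}^E$ and let $Y$ denote the induced graph of links of $x_0$ from Definition~\ref{defn:001}, so that $\link_{\widehat{X}^E}(q(x_0)) = Y^E$. The strategy is then to invoke Lemma~\ref{lem:006} on $Y$, which requires three verifications: (a) the underlying graph of $Y$ is a tree; (b) each vertex space of $Y$ is a flag complex; and (c) each edge space of $Y$ embeds as a full subcomplex of its adjacent vertex spaces.

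For (b), each vertex space of $Y$ is $\link_{X_{v_i}}(x_i)$, which is flag because the cube complex $X_{v_i}$ is nonpositively curved. For (c), the attaching maps of edge spaces in $\widehat{X}$ are injective local isometries, so the edge spaces are embedded locally convex subcomplexes of their vertex spaces, and Remark~\ref{rem:002} then delivers exactly what we need. For (a), the underlying graph of $Y$ is the horizontal graph $q^{-1}(q(x_0))$; I would show that the restriction of the canonical map $\widehat{X} \to \Gamma_{\widehat{X}}$ to this horizontal graph is a graph immersion (at each $0$-cube $x_i$, the incident vertical $1$-cubes correspond bijectively to the distinct edge spaces containing $x_i$, using injectivity of the attaching maps, and hence map to distinct edges of $\Gamma_{\widehat{X}}$), and then conclude that a connected graph immersing into a tree is itself a tree.

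Once (a), (b), and (c) are in place, Lemma~\ref{lem:006} immediately yields that $Y^E = \link_{\widehat{X}^E}(q(x_0))$ is flag. Since the $0$-cube $q(x_0)$ was arbitrary, every vertex link of $\widehat{X}^E$ is flag, establishing nonpositive curvature. I expect (a) to be the main subtlety, since without the tree structure of $\Gamma_{\widehat{X}}$ the horizontal graph might contain cycles and place us outside the hypotheses of Lemma~\ref{lem:006}; the remaining ingredients essentially unpack the definitions and translate the hypotheses directly.
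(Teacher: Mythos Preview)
Your proposal is correct and follows essentially the same route as the paper's proof: reduce to checking flagness of each vertex link, identify that link with the horizontal quotient $Y^E$ of the induced graph of links, and then invoke Lemma~\ref{lem:006} after verifying that the underlying graph of $Y$ is a tree (via the immersion of horizontal graphs into $\Gamma_{\widehat{X}}$), the vertex spaces are flag, and the edge spaces are full. The paper's version is more terse---it asserts these three conditions in a single sentence---while you spell out the justifications (strictness of $q$, Remark~\ref{rem:002} for fullness, and the immersion argument for the tree property), but there is no substantive difference in strategy.
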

\begin{proof}
Let $x$ be a $0$-cube in $\widehat{X}^{E}$ and let $Y$ be the corresponding induced graph of links \textcolor{black}{with underlying graph $q^{-1}\left(x\right)$}. \textcolor{black}{Since $q^{-1}\left(x\right)$ immerses in $\Gamma_{\widehat{X}}$, it is a tree.} Then $Y$ is a tree of flag complexes with embedded full edge-spaces. By Lemma~\ref{lem:006}, the horizontal quotient $Y^{E}$ is flag, and so is \textcolor{black}{$\link_{\widehat{X}^{E}}\left(x\right)$}.
\end{proof}
\begin{defn}\label{remoteoscu}
Let $X$ be a graph of cube complexes with horizontal quotient $q:X\rightarrow X^{E}$. Let $G$ be a connected \textcolor{black}{subgraph of a} horizontal graph in $X$. Then: 
\begin{enumerate}
\item A hyperplane $U$ \textit{osculates with} $G$ if $U$ is dual to a vertical $1$-cube whose initial or terminal $0$-cube lies in $G$. 
\item A two-sided hyperplane $U$ \textit{self-osculates at $G$} if 
$U$ is dual to oriented vertical $1$-cubes $a$ and $b$ whose initial (or terminal) $0$-cubes $t_a$ and $t_b$ lie in $G$, \textcolor{black}{where $q\left(a\right)$ and $q\left(b\right)$ are not consecutive $1$-cubes of a $2$-cube in $X^{E}$}, and $q\left(a\right)\neq q\left(b\right)$. When $t_a\neq t_b$, the hyperplane $U$ \textit{remotely self-osculates at $G$}, \textcolor{black}{in which case} we say $X$ has \textit{remote self-osculation} \textcolor{black}{(\emph{witnessed by $a$ and $b$})}.
\item A pair of \textcolor{black}{distinct} crossing hyperplanes $U$ and $V$ \textit{inter-osculate at $G$} if there are vertical $1$-cubes $a$ and $b$, with $a$ dual to $U$ and $b$ dual to $V$, with boundary $0$-cubes $t_a$ and $t_b$ lying in $G$, but $q\left(a\right)$ and $q\left(b\right)$ are not consecutive $1$-cubes of a $2$-cube in $X^{E}$. When $t_a\neq t_b$, the hyperplanes $U$ and $V$ \textit{remotely inter-osculate at $G$} \textcolor{black}{in which case} we say $X$ has \textit{remote inter-osculation} \textcolor{black}{(\emph{witnessed by $a$ and $b$})}.
\end{enumerate}
\end{defn}
Note that Definition~\ref{remoteoscu} agrees with the definitions in Section~\ref{sec:special} when $t_a=t_b$. 
\begin{rem}\label{rem:003}
Remote self-osculations and inter-osculations in $X$ \textcolor{black}{are not actual self-osculations and inter-osculations, but they} project to self-osculations/inter-osculations under the horizontal quotient $q:X\rightarrow X^{E}$ whenever $q$ is cubical.
\end{rem}
\begin{lem}\label{lem:007}
Let $X$ be a graph of cube complexes and suppose $X$ \textcolor{black}{has no one-sided, self-crossing, self-osculating, or inter-osculating hyperplanes}. If the horizontal quotient \textcolor{black}{$q: X\rightarrow X^{E}$ is cubical and} $X^{E}$ has self-osculation/inter-osculation then $X$ has remote self-osculation/inter-osculation.
\end{lem}
\begin{proof}
Let $U\xrightarrow{f} X^{E}$ be a self-osculating hyperplane. By Lemma~\ref{lem:003}, there is a hyperplane $V\xrightarrow{g} X$ with $q\circ g\left(V\right)=f\left(U\right)$. \textcolor{black}{Since $X$ has no self-crossing hyperplanes}, \textcolor{black}{$g$ and (hence) $f$ are embeddings}, and so we can identify $U$ and $V$ with their images. Since \textcolor{black}{the hyperplanes of $X$ are $2$-sided, and} $q$ is orientation-preserving, the $1$-cubes of $X^{E}$ can be oriented consistently with the orientations of $1$-cubes of $X$. Let $a_u$ and $b_u$ be distinct oriented $1$-cubes dual to $U$ that share the $0$-cube $t$ where the self-osculation occurs. We can assume without loss of generality that $t$ is the terminal $0$-cube of $a_u$ and $b_u$. Let $a_v$ and $b_v$ be oriented $1$-cubes dual to $V$ and mapping to $a_u$ and $b_u$, respectively.  Let $G=q^{-1}\left(t\right)$ be the horizontal graph mapping to $t$. Let $t_a$ and $t_b$ be terminal points of $a_v$ and $b_v$. See Figure~\ref{t1}. Then $t_a$ and $t_b$ lie in $G$ and \textcolor{black}{since $X$ has no self-osculating hyperplanes}, $t_a\neq t_b$. Since $q\left(a_v\right)=a_u\neq b_u=q\left(b_v\right)$, the hyperplane $V$ remotely self-osculates at $G$.\\
\begin{figure}[t]\centering
\includegraphics[width=.6\textwidth]{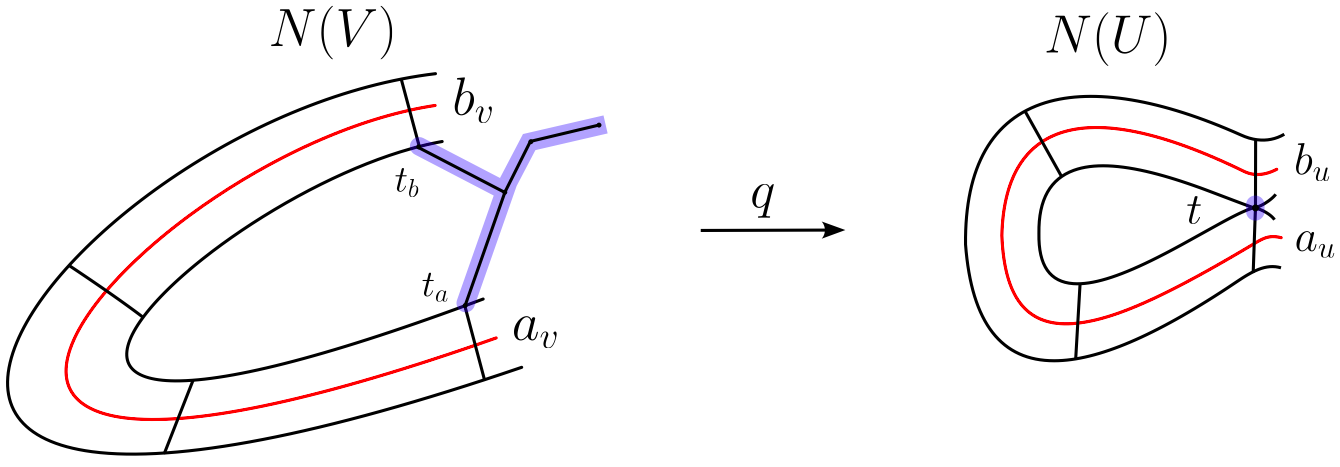}
\caption[ ]{The hyperplane $V$ osculates with $G=q^{-1}\left(t\right)$ at two points $t_a$ and $t_b$.}{\label{t1}
}
\end{figure}
Let $U_1$ and $U_2$ be inter-osculating hyperplanes in $X^{E}$, and let $V_1$ and $V_2$ be the crossing hyperplanes in $X$ mapping to $U_1$ and $U_2$, respectively. Suppose the inter-osculation occurs at $1$-cubes $a_{u_1}$ and $b_{u_2}$ dual to $U_1$ and $U_2$ and meeting at a $0$-cube $t$. Let $a_{v_1}$ and $b_{v_2}$ be $1$-cubes dual to $V_1$ and $V_2$ and mapping to $a_{u_1}$ and $b_{u_2}$, respectively. \textcolor{black}{Since $X$ has no inter-osculating hyperplanes}, $G=q^{-1}\left(t\right)$ is nontrivial and contains the distinct $0$-cubes $t_a$ and $t_b$ of $a_{v_1}$ and $b_{v_2}$. Moreover, since $a_{u_1}$ and $b_{u_2}$ do not form a consecutive pair of edges of a $2$-cube, $V_1$ and $V_2$ remotely inter-osculate at $G$.
\end{proof}

\begin{defn}\label{defn:selfosculation}
Let $X$ be a cube complex. A subcomplex $X'\subset X$ \textit{self-osculates} if there is a hyperplane $U'$ of $X'$ that extends to a hyperplane $U$ of $X$ dual to a $1$-cube \textcolor{black}{whose intersection with $X'$ consists of $0$-cubes.}
\end{defn}
\begin{defn}\label{defn:controlled1}
A graph of cube complexes is \textit{controlled} if for each \textcolor{black}{thick} edge-space $X_e\times I$ attached to vertex-spaces $X_{v_1}$ and $X_{v_2}$, the following hold for each $i\in\left\{1,2\right\}$:
\begin{enumerate}
\item distinct hyperplanes of $X_e$ extend to distinct hyperplanes of $X_{v_i}$ (wall-injectivity);
\item non-crossing hyperplanes of $X_e$ extend to non-crossing hyperplanes of $X_{v_i}$ (cross-injectivity);
\item the edge-space $X_e$ is non self-osculating in $X_{v_i}$.
\end{enumerate}
\end{defn}

\begin{lem}\label{lem:009}
Let $\widehat{X}\rightarrow \Gamma_{\widehat{X}}$ be a controlled tree of cube complexes and suppose each vertex-space of $\widehat{X}$ has embedded hyperplanes. Then each hyperplane $U$ of $\widehat{X}$ \textcolor{black}{dual to a vertical $1$-cube} splits as a tree of spaces $U\rightarrow \Gamma_U$ so that the following diagram commutes:
\begin{center}
\begin{tikzcd}
U \arrow[r] \arrow[d]    & \widehat{X} \arrow[d] \\
\Gamma_U \arrow[r] & \Gamma_{\widehat{X}} 
\end{tikzcd}
\end{center}
Moreover, $\Gamma_U\rightarrow \Gamma_{\widehat{X}}$ is an embedding, 
\textcolor{black}{each hyperplane splits as a tree of connected spaces, each of which embeds in $\widehat{X}$},
and consequently, $U$ embeds in $\widehat{X}$ and $U\cap X_v$ is connected for each vertex-space $X_v\subset \widehat{X}$.
\end{lem}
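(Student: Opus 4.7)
The plan is to produce the tree-of-spaces structure on $U$ by restricting to the pieces of $\widehat{X}$, and then to show that the induced map $\Gamma_U\to \Gamma_{\widehat{X}}$ is an immersion of graphs; since $\Gamma_{\widehat{X}}$ is a tree and $U$ is connected, this will automatically promote to an embedding. Concretely, for each vertex space $X_v\subset\widehat{X}$, I would take the vertex spaces of $\Gamma_U$ over $v$ to be the connected components of $U\cap X_v$. By the hypothesis that each vertex space has embedded hyperplanes, every such component is a single hyperplane of $X_v$. Analogously, over each edge $e$ of $\Gamma_{\widehat{X}}$, the edge spaces of $\Gamma_U$ are the hyperplanes of $X_e$ whose extensions via the attaching maps lie in the chosen hyperplanes of $X_{v_1}$ and $X_{v_2}$. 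With attaching maps inherited from those of $\widehat{X}$, this yields a tree-of-spaces $U\to \Gamma_U$ together with the required map $\Gamma_U\to \Gamma_{\widehat{X}}$, and the square commutes by construction.

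The crux is local injectivity of $\Gamma_U\to \Gamma_{\widehat{X}}$, and this is where wall-injectivity enters. Suppose two distinct edges $\varepsilon_1,\varepsilon_2$ of $\Gamma_U$ share a vertex $v_U$ lying over $v\in\Gamma_{\widehat{X}}$ and both project to the same edge $e$ of $\Gamma_{\widehat{X}}$. Then $\varepsilon_1,\varepsilon_2$ correspond to distinct hyperplanes $H_1\neq H_2$ of $X_e$, both of whose extensions under the attaching map $X_e\hookrightarrow X_v$ coincide with the hyperplane of $X_v$ represented by $v_U$. This directly contradicts Definition~\ref{defn:controlled}(1), which asserts that distinct hyperplanes of $X_e$ extend to distinct hyperplanes of $X_v$. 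Hence $\Gamma_U\to \Gamma_{\widehat{X}}$ is an immersion.

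To conclude that $\Gamma_U\hookrightarrow \Gamma_{\widehat{X}}$, note that $U$ is connected by definition and the vertex and edge spaces of $\Gamma_U$ are connected by construction, so $\Gamma_U$ is connected. For a connected graph that immerses into a tree, any shortest edge-path between two preimages of a common vertex, or any shortest essential loop, would project to a closed edge-path in the tree; such a path must backtrack, and the backtracking would lift to two edges of $\Gamma_U$ at a common vertex sharing a common image edge in $\Gamma_{\widehat{X}}$, contradicting the immersion property. Thus $\Gamma_U$ embeds as a subtree of $\Gamma_{\widehat{X}}$. Each $v\in\Gamma_{\widehat{X}}$ therefore has at most one preimage in $\Gamma_U$, so $U\cap X_v$ is either empty or a single hyperplane of $X_v$, which is connected. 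Since hyperplanes embed in each $X_v$ and the attaching maps of $\widehat{X}$ are injective, the pieces assemble into an embedding $U\hookrightarrow \widehat{X}$.

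The main obstacle I anticipate is verifying cleanly that wall-injectivity supplies exactly the local injectivity required, including ruling out degenerate configurations such as an edge-loop of $\Gamma_U$ at a vertex lying over $v$—which would correspond to a single hyperplane of $X_e$ extending twice into the same hyperplane of $X_v$, again precluded by Definition~\ref{defn:controlled}(1) combined with the tree structure of $\Gamma_{\widehat{X}}$ (so that no edge of $\Gamma_{\widehat{X}}$ is a loop). It is a useful sanity check that conditions (2) and (3) of being controlled are not needed for this lemma; they govern crossings and osculations within vertex spaces rather than the tree-combinatorial structure exploited here.
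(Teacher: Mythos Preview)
Your proposal is correct and follows essentially the same approach as the paper's proof: define the induced graph-of-spaces structure on $U$ via components of intersections with vertex and edge spaces, use wall-injectivity (Definition~\ref{defn:controlled}(1)) to establish that $\Gamma_U\to\Gamma_{\widehat{X}}$ is an immersion, and conclude it is an embedding since $\Gamma_{\widehat{X}}$ is a tree. Your write-up is simply more explicit than the paper's terse version, and your observation that conditions~(2) and~(3) are not needed here is accurate.
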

\begin{proof}
Let $U\rightarrow \Gamma_U$ be a graph of spaces decomposition induced by $\widehat{X}\rightarrow \Gamma_{\widehat{X}}$. \textcolor{black}{Since $U$ is dual to a vertical $1$-cube, $U$ has nonempty intersection with at least one vertex-space.} The vertex-spaces of $U$ are the components of intersections with the vertex-spaces of $\widehat{X}$, and likewise for edge-spaces. Wall-injectivity implies that $U\cap X_v$ is a single hyperplane for each vertex-space $X_v$ intersecting with $U$. So $\Gamma_U\rightarrow \Gamma_{\widehat{X}}$ is an immersion and thus an injection. Therefore, $\Gamma_U$ is a tree and $U\rightarrow \widehat{X}$ is an embedding.
\end{proof}
\begin{lem}\label{lem:010}
Let $\widehat{X}\rightarrow \Gamma_{\widehat{X}}$ be a controlled tree of cube complexes and let $X_e$ be an edge-space in a vertex-space $X_v$. Let $U\subset \widehat{X}$ be an embedded hyperplane dual to a vertical $1$-cube $a\in X_v$. If $a\cap X_e$ \textcolor{black}{consists of} $0$-cubes then $U\cap X_e=\emptyset$. See Figure~\ref{r5}.
\begin{figure}[t]\centering
\includegraphics[width=.7\textwidth]{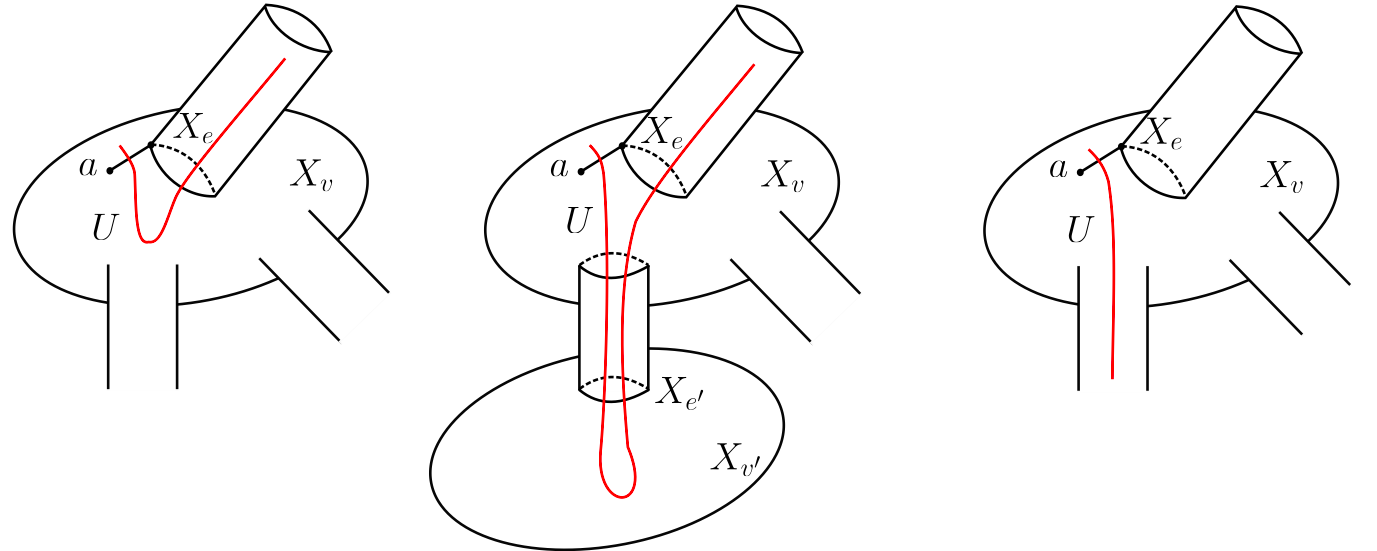}
\caption[ ]{The edge-space $X_e$ osculates with the hyperplane $U$. If $X_e\cap\ U\neq \emptyset$, then either $X_e$ self-osculates (left) or wall-injectivity fails in some edge-space $X_{e'}$ (middle). $X_e\cap U=\emptyset$ (right).}{\label{r5}
}
\end{figure}
\end{lem}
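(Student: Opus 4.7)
The plan is to argue by contradiction: assuming $U\cap X_e\neq \emptyset$, I will show that $X_e$ self-osculates as a subcomplex of $X_v$ in the sense of Definition~\ref{defn:selfosculation}, which contradicts clause (3) of the controlled hypothesis.

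First, I would apply Lemma~\ref{lem:009} to the embedded hyperplane $U$: this gives that $U$ splits as a tree of spaces $U\rightarrow \Gamma_U$ with $\Gamma_U\hookrightarrow \Gamma_{\widehat{X}}$, and that $U_v:=U\cap X_v$ is a single connected hyperplane of $X_v$. Since $a\subset X_v$ is dual to $U$ by hypothesis, $a$ is in particular dual to $U_v$ inside $X_v$.

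The main technical step is to show that $U_e:=U\cap X_e$ is a single hyperplane of $X_e$. Because $X_e\subset X_v\subset \widehat{X}$ are subcomplex inclusions, the equivalence relation on midcubes defining hyperplanes of $X_e$ is the restriction of the one defining hyperplanes in $\widehat{X}$; consequently, any hyperplane of $X_e$ meeting $U$ lies entirely in $U_e$. Thus $U_e$ is a disjoint union of hyperplanes of $X_e$, each contained in the single hyperplane $U_v$ of $X_v$, and therefore each extending to $U_v$. Wall-injectivity (clause (1) of controlled) applied to $X_e\hookrightarrow X_v$ says that distinct hyperplanes of $X_e$ extend to distinct hyperplanes of $X_v$, so at most one such hyperplane of $X_e$ is contained in $U_v$. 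Since $U_e\neq \emptyset$, it is exactly one hyperplane of $X_e$. Definition~\ref{defn:selfosculation} now applies verbatim: the hyperplane $U_e$ of the subcomplex $X_e$ extends to the hyperplane $U_v$ of $X_v$, and $U_v$ is dual to the $1$-cube $a$ whose intersection with $X_e$ is a single $0$-cube. Hence $X_e$ self-osculates in $X_v$, contradicting clause (3) of controlled.

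The main obstacle is the middle step, namely ensuring that $U_e$ is a single hyperplane rather than several hyperplanes of $X_e$ fusing inside $U_v$; this is exactly the pathology shown in the middle panel of Figure~\ref{r5}, and it is the whole point of building wall-injectivity into the definition of controlled. The remaining steps are essentially just unpacking Lemma~\ref{lem:009} and Definition~\ref{defn:selfosculation}.
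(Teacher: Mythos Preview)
Your proof is correct and follows the same route as the paper: invoke Lemma~\ref{lem:009} to get that $U\cap X_v$ is a single connected hyperplane of $X_v$, then derive a contradiction with the non-self-osculation clause of Definition~\ref{defn:controlled}. Your intermediate use of wall-injectivity to pin down $U_e$ as a \emph{single} hyperplane of $X_e$ is correct but unnecessary, since Definition~\ref{defn:selfosculation} only asks for \emph{some} hyperplane of $X_e$ extending to $U_v$, and any component of $U_e$ already provides one; the wall-injectivity is really only needed inside the proof of Lemma~\ref{lem:009} (for other edge spaces $X_{e'}$, as in the middle panel of Figure~\ref{r5}) to secure the connectedness of $U\cap X_v$.
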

\begin{proof}
By Lemma~\ref{lem:009}, the intersection $U\cap X_v$ is connected. Since $X_e$ is not self-osculating in $X_v$, we have $U\cap X_e=\emptyset$. 
\end{proof}

\begin{lem}\label{disconnected}

Let $X$ be \textcolor{black}{a controlled tree} of cube complexes and let $q:X\rightarrow X^{E}$ be the corresponding horizontal quotient. Let $U$ and $V$ be crossing hyperplanes in $X$ that map into hyperplanes of $X^{E}$. Suppose $X$ is special. Then:
\begin{enumerate}
\item \label{ladder_part1}\textcolor{black}{If $U$ is dual to $1$-cubes $a$ and $b$ with terminal $0$-cubes $t_a$ and $t_b$ respectively, and $\gamma\rightarrow N\left(U\right)$ is a horizontal path from $t_a$ to $t_b$, then $q\left(a\right)=q\left(b\right)$. That is, $a$ and $b$ lie in the same E-parallelism class}.

\item \label{ladder_part2}If $U$ remotely self-osculates at a horizontal graph $G$, then $G\cap N\left(U\right)$ is disconnected. 
\item \label{ladder_part3} If $U$ and $V$ remotely inter-osculate at a horizontal graph $G$, then 

$G\cap \big(N\left(U\right)\cup N\left(V\right)\big)$  is disconnected.
\end{enumerate}
\end{lem}
\textcolor{black}{Note that since $U$ and $V$ map to hyperplanes in $X^{E}$, they do not lie in the middle of a thick edge-space. Thus, any $1$-cube that is dual to $U$ or $V$ is not horizontal. }
\begin{proof}[Proof of Part~\eqref{ladder_part1}] \textcolor{black}{Express $\gamma$ as a concatenation $\gamma=x_0e_1x_1\cdots e_{n-1}x_{n-1}e_nx_n$,  where each $e_i$ is a horizontal $1$-cube joining the $0$-cubes $x_{i-1}$ and $x_i$, with $x_0=t_a$ and $x_n=t_b$. We show that $q\left(a\right) = q\left(b\right)$ by finding a sequence of E-parallel $1$-cubes $a_0=a, a_1, \ldots, a_n=b$ dual to $U$, with terminal $0$-cubes $x_0=t_a, x_1, \ldots, x_n=t_b$. We proceed by induction on $n$.} 

\textcolor{black}{If $n=0$, then $\gamma$ is trivial, and so the $0$-cube $t_a=t_b=x_0$ lies in both $a=a_0$ and $b$ which are dual to $U$. By assumption, $U$ does not self-osculate, and so $a_0=a=b$.}

\textcolor{black}{Suppose the claim holds for $n\geq 0$. Let $\gamma=x_0e_1x_1\cdots e_{n}x_{n}e_{n+1}x_{n+1}$ be a horizontal path from $t_a=x_0$ to $t_b=x_{n+1}$. See Figure~\ref{dual_to_horizontal}. By the inductive assumption, there is a sequence of E-parallel $1$-cubes $a=a_0, a_1,\ldots,a_n$ dual to $U$ and containing the $0$-cubes $x_0,\ldots,x_n$, respectively. Since the attaching maps of edge-spaces are injective, any horizontal $1$-cube $e\subset N\left(U\right)$ lies in a $2$-cube $S\subset N\left(U\right)$ whose opposite $1$-cube is also horizontal. Let $S_{n+1}$ be a $2$-cube containing $e_{n+1}$ with faces $a', e_{n+1}, a_{n+1}, e_{n+1}'$, where $e_{n+1}'$ is the horizontal $1$-cube in $N\left(U\right)$ that is opposite to $e_{n+1}$ in $S_{n+1}$. Note that both $a'$ and $a_{n+1}$ are dual to $U$. Also, note that both $a_n$ and $a'$ contain $x_n$, and both $a_{n+1}$ and $b$ contains $x_{n+1}=t_b$.} 
\begin{figure}[t]\centering
\includegraphics[width=.7\textwidth]{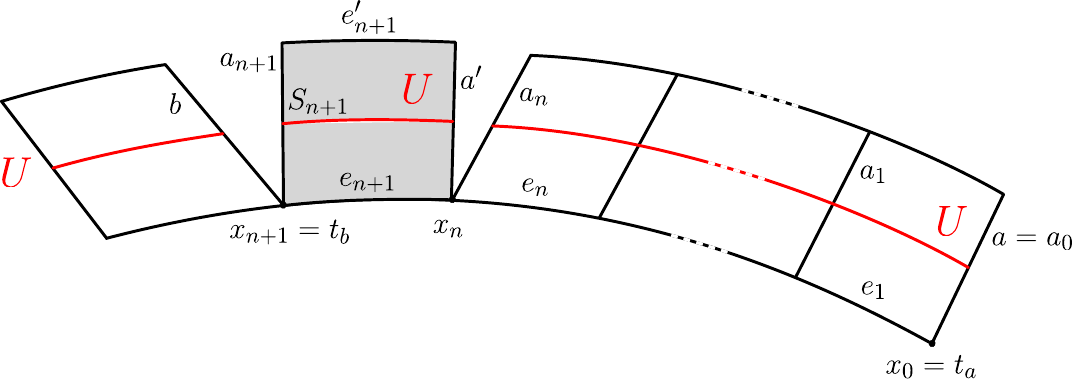}
\caption[ ]{Since $U$ is non self-osculating, $a' = a_n$ and $a_{n+1} = b$.}{\label{dual_to_horizontal}
}
\end{figure}\textcolor{black}{Since $U$ is non-self-osculating, we have $a'=a_n$ and $a_{n+1}=b$. So $q\left(b\right)=q\left(a_n\right)$, and thus $q\left(a\right)=q\left(b\right)$.}
\end{proof}

\begin{proof}[Proof of Part~\eqref{ladder_part2}] \textcolor{black}{Suppose $U$ is a remotely self-osculating hyperplane in $X$. Let $a$ and $b$ be oriented $1$-cubes dual to $U$ with terminal $0$-cubes $t_a$ and $t_b$ in $G\cap N\left(U\right)$, witnessing the remote self-osculation of $U$ as in Definition~\ref{remoteoscu}. Then 
$t_a\neq t_b$ and $q\left(a\right)\neq q\left(b\right)$. By Part~\ref{ladder_part1}, there is no horizontal path in $N\left(U\right)$ joining $t_a$ to $t_b$, and thus $G\cap N\left(U\right)$ is disconnected.}
\end{proof}
\begin{proof}[Proof of Part~\eqref{ladder_part3}]
\textcolor{black}{Suppose $U$ and $V$ are remotely inter-osculating hyperplanes in $X$. Let $a$ and $b$ be the vertical $1$-cubes dual to $U$ and $V$, respectively, with boundary $0$-cubes $t_a\neq t_b$ in $G$, \textcolor{black}{witnessing the remote inter-osculation} as in Definition~\ref{remoteoscu}. We claim that $t_a$ and $t_b$ lie in distinct components of $G\cap \left(N\left(U\right)\cup N\left(V\right)\right)$. Suppose otherwise. Then there is a nontrivial horizontal path $\gamma\rightarrow \left(N\left(U\right)\cup N\left(V\right)\right)$ from $t_a$ to $t_b$. Note that $\gamma$ is the concatenation of horizontal subpaths lying alternatively in $N\left(U\right)$ and $N\left(V\right)$. Assume without loss of generality that  $\gamma=\gamma_1\gamma_2\cdots\gamma_k$ is such a path, where $\gamma_1\subset N\left(U\right)$ and $\gamma_k\subset N\left(V\right)$. Let $y_1, \ldots,y_{k-1}$ be the $0$-cubes in $N\left(U\right)\cap N\left(V\right)$ where the consecutive subpaths of $\gamma$ meet. Then $\left\{y_1, \ldots, y_{k-1}\right\}\neq \emptyset$, since $\gamma$ is a horizontal path from $t_a\in N\left(U\right)$ to $t_b\in N\left(V\right)$. Let $y_0= t_a$ be the initial $0$-cube of $\gamma_1$ and let $y_k=t_b$ be the terminal $0$-cube of $\gamma_k$. Let $a_0=a, a_1,\ldots,a_{k-1}$ be $1$-cubes dual to $U$ with terminal $0$-cubes $y_0,y_1,\ldots,y_{k-1}$, respectively, and let $b_1,\ldots, b_k=b$ be $1$-cubes dual to $V$ with terminal $0$-cubes $y_1,\ldots,y_{k}$, respectively. Since $X$ has no inter-osculating hyperplanes, there are $2$-cubes $S_1,\ldots,S_{k-1}$ where each $S_j$ contains the pair of $1$-cubes $a_j$ and $b_j$ as consecutive faces}. See Figure~\ref{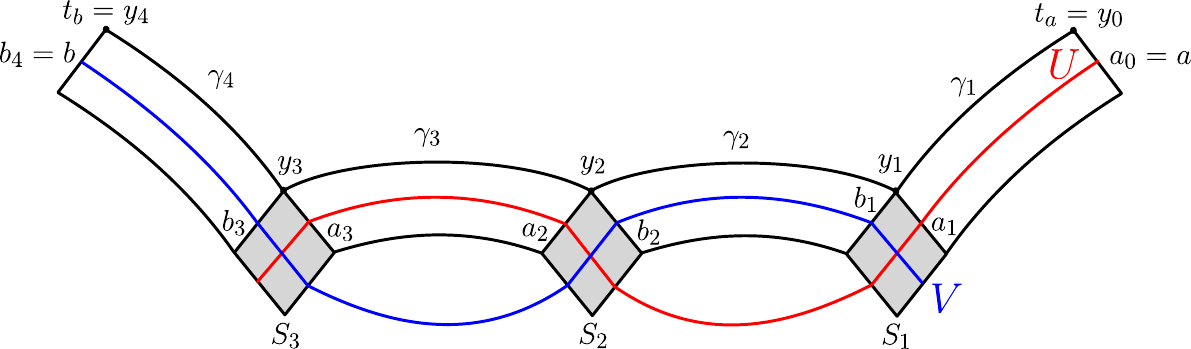}. 
\begin{figure}[t]\centering
\includegraphics[width=.8\textwidth]{interoscu_horizontal.pdf}
\caption[ ]{The path $\gamma=\gamma_1\gamma_2\cdots$ in the proof of Lemma~\ref{disconnected}.\eqref{ladder_part3}.}{\label{interoscu_horizontal.pdf}
}
\end{figure}

\textcolor{black}{To reach a contradiction, we show that $q\left(a\right)$ and $q\left(b\right)$ form consecutive 
$1$-cubes of a $2$-cube in $X^{E}$, contradicting Definition~\ref{remoteoscu}. By Part~\ref{ladder_part1}, we have $q\left(a\right)=q\left(a_1\right)$  and $q\left(b\right)=q\left(b_{k-1}\right)$, and so it suffices to show that $q\left(a_1\right)$ and $q\left(b_{k-1}\right)$ are consecutive $1$-cubes of a $2$-cube in $X^{E}$. To this end, we prove the following claim:}

\textcolor{black}{\underline{Claim}: The subpaths of $\gamma$ between $y_1$ and $y_{k-1}$ lie in $N\left(U\right)\cap N\left(V\right)$.}

\begin{proof}[Proof of Claim] \textcolor{black}{Let $\gamma_j$ be a subpath of $\gamma$ from $y_{j-1}$ to $y_j$, with $2 \leq j\leq k-1$. Suppose without loss of generality that $\gamma_j\subset N\left(V\right)$. Express $\gamma_j$ as a concatenation $x_0e_1\cdots e_{n-1}x_{n-1}e_{n}x_{n}$ where each $e_i$ is a horizontal $1$-cube joining the $0$-cubes $x_{i}$ and $x_{i+1}$, with $x_0=y_{j-1}$ and $x_{n}=y_{j}$.  We proceed by induction on $n$.}

\textcolor{black}{If $n=0$, then $\gamma_j = y_j=y_{j-1} \in N\left(U\right)\cap N\left(V\right)$. Suppose the claim holds for $n-1$. Then $x_{n-1}\in N\left(U\right)\cap N\left(V\right)$. Let $f\subset N\left(U\right)$ be a $1$-cube containing $x_{n-1}$ and dual to $U$. Let $X_{e_{n}}\times I$ be the thick edge-space containing $e_n$. Since $x_n=y_j \in N\left(U\right)\cap N\left(V\right)$ and $X$ is a tree of spaces, $U$ must travel through $X_{e_n}\times I$, and so $U\cap X_{e_n}\neq \emptyset$. Since $f\cap X_{e_n} \neq \emptyset$ and $X$ is controlled, by Lemma~\ref{lem:010}, we have $f\subset X_{e_n}$, and thus $f \times I \subset X_{e_n}\times I$. Since the attaching maps of edge-spaces are injective, $e_n\subset f\times I$. So $e_n\subset N\left(U\right)$, and thus $\gamma_j\subset N\left(U\right)$.} \end{proof}
Finally, by the above claim, there is a horizontal path in $N\left(U\right)$ [respectively, $N\left(V\right)$] from $t_{a_1}$ and thus $t_a$ to $t_{a_{j}}$ for any $1 \leq j\leq k-1$ [respectively, from $t_b$ and thus $t_{b_{k-1}}$ to $t_{b_j}$ for any $1 \leq j\leq k-1$]. By Part~\ref{ladder_part1}, we have $q\left(a\right)=q\left(a_j\right)$ [respectively, $q\left(b\right)=q\left(b_j\right)$] for all $1\leq j \leq k-1$. Since there is at least one $2$-cube $S_j$ that lies in a vertex-space, with consecutive $1$-cubes $a_j$ and $b_j$, it follows that $q\left(a\right)$ and $q\left(b\right)$ are consecutive $1$-cubes of the $2$-cube $q\left(S_j\right)$, contradicting Definition~\ref{remoteoscu}.
\end{proof}

\begin{lem}\label{lem:011}
Let $\widehat{X}\xrightarrow{p} \Gamma_{\widehat{X}}$ be a controlled tree of cube complexes \textcolor{black}{that is special}. Then $\widehat{X}$ has no remote self-osculation/inter-osculation.
\end{lem}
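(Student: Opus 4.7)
My plan is to argue by contradiction: assume $\widehat X$ has a remote self-osculation at some horizontal graph $G$, witnessed by vertical 1-cubes $a \in X_{v_a}$ and $b \in X_{v_b}$ both dual to a hyperplane $U$, with distinct terminal 0-cubes $t_a, t_b \in G$ and $q(a) \neq q(b)$. The strategy is to propagate $a$ along the horizontal path in $G$ from $t_a$ to $t_b$, obtaining a 1-cube with the same $q$-image as $a$ but now based at $t_b$, and then identify this with $b$ to conclude $q(a)=q(b)$.

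The setup exploits the tree structure. Because attaching maps of edge spaces are injective and $\Gamma_{\widehat X}$ is a tree, $G$ embeds as a subtree of $\Gamma_{\widehat X}$ containing at most one 0-cube per vertex space. Let the geodesic from $t_a$ to $t_b$ in $G$ traverse vertex spaces $X_{v_0}=X_{v_a}, X_{v_1}, \ldots, X_{v_k}=X_{v_b}$ via edge spaces $X_{e_0}, \ldots, X_{e_{k-1}}$, and write $t_i \in X_{v_i} \cap X_{e_{i-1}} \cap X_{e_i}$ for the intermediate 0-cubes of $G$. By Lemma~\ref{lem:009}, $U$ is itself a tree of spaces over $\Gamma_U \hookrightarrow \Gamma_{\widehat X}$, with $U \cap X_{v_i}$ a single connected hyperplane $H_i$; since $v_a, v_b \in \Gamma_U$, the entire geodesic lies in $\Gamma_U$, so $U \cap X_{e_i} \neq \emptyset$ for every $i$.

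The inductive propagation goes as follows. Starting from $a^{(0)}:=a$, Lemma~\ref{lem:010} applied at $v_0$ tells me that $a^{(0)} \cap X_{e_0}$ cannot be a single 0-cube (else $U \cap X_{e_0}=\emptyset$). I then upgrade this to $a^{(0)} \subset X_{e_0}$ using the controlled hypotheses (non-self-osculation of $X_{e_0}$ together with wall-injectivity). Extending through the horizontal ladder $a^{(0)} \times I \subset X_{e_0} \times I$ produces $a^{(1)} := a^{(0)} \times \{1\} \subset X_{v_1}$, dual to $H_1$, with endpoint $t_1$ and $q(a^{(1)})=q(a^{(0)})$. Iterating this process yields $a^{(k)} \subset X_{e_{k-1}} \cap X_{v_b}$, dual to $H_k$, with endpoint $t_b$ and $q(a^{(k)})=q(a)$. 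By the symmetric argument applied to $b$, also $b \subset X_{e_{k-1}}$. So $a^{(k)}$ and $b$ are 1-cubes of $X_{e_{k-1}}$, both dual to the hyperplane $H_k \cap X_{e_{k-1}}$, sharing the endpoint $t_b$; the embeddedness of hyperplanes in $\widehat X$ (and hence in $X_{e_{k-1}}$) forces $a^{(k)}=b$, whence $q(a)=q(a^{(k)})=q(b)$, a contradiction.

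Remote inter-osculation is handled analogously. Propagating $b$ backward along the same path to $X_{v_a}$ produces $b^{(0)} \in X_{v_a}$ dual to $V \cap X_{v_a}$ with endpoint $t_a$ and $q(b^{(0)})=q(b)$. Then $a$ and $b^{(0)}$ are 1-cubes at $t_a$ dual to the hyperplanes $U \cap X_{v_a}$ and $V \cap X_{v_a}$, which must cross in $X_{v_a}$ by cross-injectivity (controlled condition (2)). Hence $a$ and $b^{(0)}$ bound a corner of a 2-cube of $X_{v_a}$, whose image in $X^E$ has $q(a)$ and $q(b^{(0)})=q(b)$ as consecutive sides, contradicting the inter-osculation hypothesis. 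The main obstacle throughout is the propagation step: Lemma~\ref{lem:010} only excludes $a^{(i)} \cap X_{e_i}$ from being a single 0-cube, and upgrading this to the full containment $a^{(i)} \subset X_{e_i}$—ruling out the exotic configuration where both endpoints of $a^{(i)}$ lie in $X_{e_i}$ while its interior does not—requires a careful combinatorial argument combining wall-injectivity with non-self-osculation of edge spaces.
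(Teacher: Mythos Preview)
Your approach differs substantially from the paper's. Rather than propagating the $1$-cube $a$ along the horizontal path, the paper invokes Lemma~\ref{disconnected} to place $t_a$ and $t_b$ in distinct components $K_1,K_2$ of $T\cap N(U)$, passes to a \emph{closest} pair $t_1\in K_1$, $t_2\in K_2$, and observes that the first horizontal edge $e_1$ of the geodesic from $t_1$ to $t_2$ cannot lie in $N(U)$ (else its terminus would be a point of $K_1$ strictly closer to $t_2$). From $e_1\notin N(U)$ one gets $a_1\notin X_{e_1}$, whence Lemma~\ref{lem:010} gives $U\cap X_{e_1}=\emptyset$; this contradicts Lemma~\ref{lem:009}, since $\Gamma_U\hookrightarrow\Gamma_{\widehat X}$ is a subtree containing both endpoints of the geodesic and hence the edge $e_1$. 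The inter-osculation case is handled by the same mechanism applied to both $U$ and $V$, together with a closed-path argument in the tree $\Gamma_{\widehat X}$. No ladder is built and no identification $a^{(k)}=b$ is attempted.

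Your ladder argument has a genuine gap at its terminal steps. You conclude $a^{(k)}=b$ because both are $1$-cubes of $X_{e_{k-1}}$ dual to the same embedded hyperplane and sharing the endpoint $t_b$; but embeddedness excludes only self-\emph{crossing}, not self-osculation, so two such $1$-cubes can be distinct. Likewise in the inter-osculation case: after propagating the crossing of $U$ and $V$ into $X_{v_a}$, you assert that $a$ and $b^{(0)}$ bound a corner of a $2$-cube at $t_a$, but this is exactly the assertion that $U$ and $V$ do not inter-osculate in $X_{v_a}$, which is not among the stated hypotheses. Both conclusions would follow if $\widehat X$ were assumed special---and indeed the paper's own argument leans on Lemma~\ref{disconnected}, which carries a specialness hypothesis---but your appeals to ``embeddedness'' and ``cross-injectivity'' alone do not deliver these identifications. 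The ``exotic configuration'' in your propagation step (both endpoints of $a^{(i)}$ in $X_{e_i}$ but $a^{(i)}\not\subset X_{e_i}$) is a further unresolved point; the paper's invocation of Lemma~\ref{lem:010} passes over the same subtlety, but your route depends on resolving it at every step of the induction rather than once.
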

\begin{proof}
The horizontal graphs of $\widehat{X}$ are trees \textcolor{black}{that intersect each vertex-space of $\widehat{X}$ in at most one $0$-cube. Suppose $U$ is a hyperplane that remotely self-osculates at a horizontal tree $T$. By Lemma~\ref{disconnected}, $T\cap N\left(U\right)$ is not connected. Let $K_1$ and $K_2$ be components of $T\cap N\left(U\right)$.}
Let $t_1, t_2\in T$ be the closest $0$-cubes in $T$ with $t_1\in K_1$ and $t_2\in K_2$. Let $a_1$ be the $1$-cube dual to $U$ and containing $t_1$. Let $\gamma=e_1\cdots e_n$ be the shortest horizontal path in $T$ from $t_1$ to $t_2$, where each $1$-cube $e_i$ is in the edge-space $X_{e_i}$. Note that $\gamma$ is nontrivial since $t_1\neq t_2$. The $1$-cube $e_1$ with initial $0$-cube $t_1$ does not lie in $N\left(U\right)$ for otherwise, the terminal  $0$-cube of $e_1$ is in $K_1$ and is closer to $t_2$. Then $a_1$ is not in $X_{e_1}$. Since $a_1\cap X_{e_1}$ consists of $0$-cubes, we have by Lemma~\ref{lem:010}, $U\cap X_{e_1}=\emptyset$. On the other hand, since $U$ splits as a graph of spaces $U\rightarrow \Gamma_U$ where $\Gamma_U$ is a subtree of $\Gamma_{\widehat{X}}$, the image $\left(\gamma\rightarrow \Gamma_{\widehat{X}}\right)\hookrightarrow \Gamma_U$ and so $U\cap X_{e_1}\neq \emptyset$ which is a contradiction.

Suppose $U$ and $V$ are hyperplanes that remotely inter-osculate at a horizontal tree $T$. \textcolor{black}{By Lemma~\ref{disconnected}, $T\cap\left(N\left(U\right)\cup N\left(V\right)\right)$ is not connected}. Let $t_1\in N\left(U\right)$ and $t_2\in N\left(V\right)$ be the closest $0$-cubes lying in distinct components of $T\cap \left(N\left(U\right)\cup N\left(V\right)\right)$. Let $\gamma_1=e_1\cdots e_n$ be the nontrivial horizontal path from $t_1$ to $t_2$, where each $1$-cube $e_i$ lies in $X_{e_i}$. Let $a_1$ and $a_2$ be the $1$-cubes dual to $U$ and $V$ and containing $t_1$ and $t_2$, respectively. As in part $(1)$, we have $a_1\notin X_{e_1}$ and $a_2\notin X_{e_n}$, and so by Lemma~\ref{lem:010}, we have $U\cap X_{e_1}=\emptyset$ and $V\cap X_{e_n}=\emptyset$. Since $\widehat{X}$ is a tree of spaces, each pair of vertex-spaces is joined by at most one edge-space. Thus, $U\cap X_{e_1}=\emptyset$ implies $U\cap X_{e_i}=\emptyset$ for all $1\leq i\leq n$. Similarly, $V\cap X_{e_1}=\emptyset$ for all $1\leq i\leq n$. Since $U$ crosses $V$, there is a $0$-cube $x\in N\left(U\right)\cap N\left(V\right)$, and a path $\gamma_2=f_1\cdots f_m$ from $t_2$ to $t_1$ passing through $x$, where $f_j\in \left(N\left(U\right)\cup N\left(V\right)\right)$. The concatenation $\gamma_1\cdot\gamma_2$ projects to a closed path in the tree $\Gamma_{\widehat{X}}$. Since $\gamma_1$ is horizontal, $\gamma_1\xrightarrow{p} \Gamma_{\widehat{X}}$ is an embedding. Hence there is a $1$-cube $f_j\in \gamma_2$ so that $p\left(f_j\right)=p\left(e_1\right)$. If $f_j\in N\left(U\right)$, then $U\cap X_{e_1}\neq \emptyset$ and if $f_j\in N\left(V\right)$, then $V\cap X_{e_1}\neq \emptyset$, both leading to contradictions.
\end{proof}

\begin{prop}\label{proposition}
Let $\widehat{X}\rightarrow \Gamma_{\widehat{X}}$ be a controlled tree of nonpositively curved cube complexes with  embedded locally convex edge-spaces. Let $q:\widehat{X}\rightarrow \widehat{X}^{E}$ be the horizontal quotient. If $\widehat{X}$ is special then so is $\widehat{X}^{E}$.
\end{prop}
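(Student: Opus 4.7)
My plan is to verify the four conditions of speciality for $\widehat{X}^{E}$ in turn, leveraging the tools assembled in this section. The underlying cube complex structure is already in place: since the edge spaces are embedded locally convex subcomplexes of the vertex spaces, their attaching maps are injective local isometries, and Corollary~\ref{cor:001} yields that $\widehat{X}^{E}$ is nonpositively curved. I also note that $q$ is strict: within each vertex space $X_v$ the map $q$ simply records the inclusion, and since $\Gamma_{\widehat{X}}$ is a tree, no two points of $X_v$ can be identified in $\widehat{X}^{E}$ via a chain of edge-space collapses.

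Next, I would dispose of conditions (1) and (2) of Section~\ref{sec:special} by direct appeal to Lemma~\ref{lem:003}. Because $\widehat{X}$ is special, its immersed hyperplanes are embedded (no self-crossing) and two-sided; the lemma then transfers both properties to $\widehat{X}^{E}$, so no hyperplane of $\widehat{X}^{E}$ self-crosses and none is one-sided.

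For conditions (3) and (4) I would argue by contrapositive. Suppose $\widehat{X}^{E}$ contains a self-osculating hyperplane, or a pair of inter-osculating hyperplanes. By Lemma~\ref{lem:007}, $\widehat{X}$ must then exhibit a remote self-osculation, respectively a remote inter-osculation. But $\widehat{X}$ is a controlled tree of cube complexes, and since $\widehat{X}$ is special its hyperplanes are embedded; therefore Lemma~\ref{lem:011} rules out both remote self-osculation and remote inter-osculation. This contradiction establishes that $\widehat{X}^{E}$ has neither self-osculating nor inter-osculating hyperplanes, and the four bullet points of Section~\ref{sec:special} are confirmed.

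The main obstacle, already absorbed into Lemma~\ref{lem:011}, is controlling the delicate interaction between the tree structure, the controlled hypothesis, and the propagation of a hyperplane across edge spaces. All of the genuine work has been pushed into that lemma and into the transfer lemmas~\ref{lem:003} and~\ref{lem:007}; the present proposition is therefore essentially an assembly statement, and the only residual item requiring care is the verification of strictness of $q$, which uses that the underlying graph is a tree with distinct vertex spaces.
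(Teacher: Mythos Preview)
Your proposal is correct and follows essentially the same route as the paper: establish nonpositive curvature via Corollary~\ref{cor:001}, note strictness of $q$ from the tree hypothesis, invoke Lemma~\ref{lem:003} for embeddedness and two-sidedness, and then combine Lemma~\ref{lem:007} with Lemma~\ref{lem:011} to rule out self-osculation and inter-osculation. The paper's proof is the same assembly, stated more tersely.
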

\begin{proof} By Corollary ~\ref{cor:001}, $\widehat{X}^{E}$ is nonpositively curved. Since $\widehat{X}$ is a tree of spaces, the horizontal quotient $q:\widehat{X}\rightarrow \widehat{X}^{E}$ is strict. By Lemma \ref{lem:003}, each hyperplane of \textcolor{black}{$\widehat{X}^{E}$} is embedded and two-sided. By Lemma~\ref{lem:007}, self-osculation/inter-osculation in $\widehat{X}^{E}$ arises from remote self-osculation/inter-osculation in $\widehat{X}$. By Lemma~\ref{lem:011}, $\widehat{X}$ has no remote self-osculation/inter-osculation.
\end{proof}

\section{Subgroup Separability}\label{separability}
The collection of finite index cosets of a group $F$ forms a basis for the \textit{profinite topology} on $F$. The multiplication and inversion are continuous with respect to this topology. A subset $S\subset F$ is \textit{separable} if it is closed in the profinite topology. A subgroup $H\subset F$ is separable if and only if $H$ is the intersection of finite index subgroups.

\begin{thm}[Ribes-Zalesskii \cite{RibesZalesskii93}]\label{thm:RZ}
Let $H_1,\ldots,H_m$ be finitely generated subgroups of a free group $F$. Then $H_1H_2\cdots H_m$ is closed in the profinite topology.
\end{thm}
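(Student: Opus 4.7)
The plan is to pass to the profinite completion $\hat F$ of $F$, whose induced topology on $F$ is the profinite topology, so a subset $S\subseteq F$ is closed in the profinite topology if and only if $S=F\cap \overline{S}^{\hat F}$. Accordingly, I aim to establish two facts: (i) the product $\overline{H_1}\,\overline{H_2}\cdots \overline{H_m}$ is closed in $\hat F$; and (ii) this product meets $F$ exactly in $H_1 H_2\cdots H_m$.

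Step (i) is formal. Each $\overline{H_i}$ is a closed subgroup of $\hat F$, hence compact by compactness of $\hat F$. The multiplication map $\hat F\times\cdots\times \hat F\to \hat F$ is continuous, so the image $\overline{H_1}\cdots \overline{H_m}$ of the compact set $\overline{H_1}\times\cdots\times \overline{H_m}$ is compact; since $\hat F$ is Hausdorff, the image is closed.

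Step (ii) is the substance. The inclusion $\supseteq$ is immediate. For $\subseteq$, given $g\in F$ with $g=\bar h_1\cdots \bar h_m$ and $\bar h_i\in \overline{H_i}$, I would seek a genuine factorization $g=h_1\cdots h_m$ with $h_i\in H_i$. My approach would represent each $H_i$ by its finite Stallings graph $\Gamma_i$ immersed in the rose $R$ with $\pi_1(R)=F$, and proceed by induction on $m$. The base case $m=1$ is Marshall Hall's theorem, proved by completing the finite immersion $\Gamma_1\to R$ to a finite covering. The inductive step would splice the $\Gamma_i$ into a larger finite graph which encodes all candidate factorizations of $g$, then extend this graph to a finite cover of $R$ in which the absence of a factorization over $F$ translates into the absence of a suitable concatenation of loops in the cover; passing to the associated finite quotient of $F$ would then separate $g$ from $H_1\cdots H_m$.

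The central obstacle is step (ii), and specifically the passage from $m=1$ to $m\ge 2$: that a profinite factorization $g=\bar h_1\cdots \bar h_m$ with $g\in F$ forces a factorization of $g$ inside $F$ does not follow from separability of the individual $H_i$ and is genuinely deep. Ribes and Zalesskii originally argue it via pro-$\pi$ topologies on free groups and an analysis of fibre products of profinite graphs, while later graph-theoretic proofs (for example Auinger and Steinberg, or Coulbois) realize the same principle as a statement about foldings of tree products of Stallings graphs. I would follow one of these routes, as I do not see an elementary bypass that avoids this machinery.
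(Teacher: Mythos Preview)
The paper does not prove this theorem. It is stated with a citation to Ribes--Zalesskii \cite{RibesZalesskii93} and used thereafter as a black box; the only remark following the statement is the immediate corollary that finite products of cosets $g_1H_1g_2H_2\cdots g_mH_m$ are likewise closed. There is therefore no proof in the paper to compare your proposal against.

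As for your sketch itself: the reduction to the two steps (i) and (ii) is the correct framing, and your identification of step (ii) as the entire content is accurate. However, what you have written is not a proof but an outline of where a proof would have to go, together with pointers to the literature. In particular, your inductive step---``splice the $\Gamma_i$ into a larger finite graph which encodes all candidate factorizations of $g$, then extend this graph to a finite cover''---is exactly the place where the real work lies, and you have not carried it out. The difficulty is that the set of candidate factorizations is not obviously encodable in a single finite graph in a way that survives completion to a cover, and the known graph-theoretic arguments (Auinger--Steinberg, Coulbois, or the original profinite-graph argument) each require a nontrivial construction at precisely this point. Since the paper only needs the statement, citing it as you ultimately do is appropriate; but your write-up should not give the impression that the inductive step is routine once the Stallings graphs are on the table.
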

It follows that $g_1H_1g_2H_2\cdots g_mH_m$ is also closed in the profinite topology, for finitely generated subgroups $H_i\subset F$ and $g_i\in F$ with $1\leq i\leq m$.

\textcolor{black}{Starting with a tree of nonpositively curved cube complexes $\widehat{X}\rightarrow \Gamma_{\widehat{X}}$ and using separability properties of the free group action on $\widehat{X}$, we find compact quotients $\widehat{X}\rightarrow \overline{X}$ where the horizontal quotient $\overline{X}\rightarrow \overline{X}^{E}$ is cubical, $\overline{X}^{E}$ is nonpositively curved with well-behaved hyperplanes whenever $\widehat{X}$ is controlled and special.}
\begin{lem}\label{lem:012}
Let $X\rightarrow \Gamma_X$ be a compact graph of cube complexes with one vertex-space $Y$. Then $X$ has a finite regular cover $\overline{X}$ such that:
\begin{enumerate}
\item\label{property1} $\overline{X}\rightarrow \Gamma_{\overline{X}}$ is a graph of spaces whose vertex-spaces are isomorphic to $Y$;
\item\label{property2} The  horizontal quotient $\overline{X}\rightarrow \overline{X}^{E}$ is strict.
\end{enumerate}
\textcolor{black}{Furthermore, any finite regular cover $\overline{X}'\rightarrow \overline{X}$ induced by a cover of underlying graphs $\Gamma_{\overline{X}}'\rightarrow \Gamma_{\overline{X}}$ satisfies properties~\ref{property1} and \ref{property2}.}
\end{lem}

\begin{proof}We find a covering space that splits as a graph of cube complexes with vertex-spaces isomorphic to $Y$ and whose horizontal quotient is strict.

The underlying graph $X\rightarrow \Gamma_X$ is a bouquet of circles. Let $\widetilde{\Gamma_X}\rightarrow \Gamma_X$ be the universal covering map and let $\widehat{X}\rightarrow X$ be the corresponding covering map so that the following diagram commutes: 
\begin{center}
\begin{tikzcd}
\widehat{X} \arrow[r] \arrow[d] & \Gamma_{\widehat{X}}=\widetilde{\Gamma_X} \arrow[d] \\
X \arrow[r]                      & \Gamma_X              
\end{tikzcd}
\end{center}
Then $\pi_1\Gamma_X$ acts freely and cocompactly on $\widehat{X}$. Let  $N\subset \pi_1\Gamma_X$ be a finite index normal subgroup, and let $N\backslash\widehat{X}=\overline{X}\rightarrow X$ be the covering map induced by $N\backslash\Gamma_{\widehat{X}}=\Gamma_{\overline{X}}\rightarrow \Gamma_X$ so that the following diagram commutes:
\begin{center}
\begin{tikzcd}
\overline{X} \arrow[r] \arrow[d] & \Gamma_{\overline{X}} \arrow[d] \\
X \arrow[r]                      & \Gamma_X              
\end{tikzcd}
\end{center}
Then $\overline{X}$ is a graph of cube complexes where each vertex-space is isomorphic to $Y$. 

We need to choose $\overline{X}$, and thus $N$, so that no vertex-space has two points in the same E-parallelism class. In our cubical setting, it is sufficient to ensure that no two $0$-cubes of a vertex-space of $\overline{X}$ lie in the same E-parallelism class. \textcolor{black}{Recall that the attaching maps of edge-spaces are assumed to be injective}.

By compactness, there are finitely many $0$-cubes $\left\{C_i\right\}_{i=1}^n \subset X^0$. Let $K_i$ be the subgroup generated by the horizontal closed paths based at $C_i$ for each $i$. Fix a $0$-cube $C_i$. Then $K_i$ is finitely generated since $X$ is compact. Moreover, since horizontal paths immerse in the underlying graph, the map $X\rightarrow \Gamma_X$ induces an injective homomorphism $K_i\rightarrow  \pi_1\Gamma_X$. Identify $K_i$ with its image. Let $\left\{\gamma_{ij}\right\}_{j=1}^m$ be the set of all embedded non-closed horizontal paths between $0$-cubes $C_i$ and $C_j$. Each $\gamma_{ij}$ maps to an essential closed path in $\Gamma_X$ and thus represents a nontrivial element $w_{ij}\in\pi_1\Gamma_X$. Furthermore, $w_{ij}\notin K_i$. Indeed, since the attaching maps of edge-spaces are injective, the horizontal graphs immerse in $\Gamma_X$, and so the elements represented by $\gamma_{ij}$ are distinct from elements of $K_i$. \textcolor{black}{In particular, the products of finitely many cosets $K_iw_{ii_1}K_{i_1}w_{i_{1}i_{2}}K_{i_2}\cdots$ does not contain the identity element. Note that there are finitely many such products of cosets.} By Theorem \ref{thm:RZ}, there exists a finite index normal subgroup \textcolor{black}{$N\unlhd \pi_1\Gamma_X$ that is disjoint from all such multiple cosets.} Let $p:\overline{X}\rightarrow X$ be the covering map corresponding to $N$. Let $Z\subset \overline{X}$ be a vertex-space and $\overline{C_i},  \overline{C_j}\in Z$ be $0$-cubes mapping to $0$-cubes $C_i, C_j\in X$. Then, $\overline{C_i}$ and $\overline{C_j}$ are not in the same E-parallelism class of $\overline{X}$. Indeed, if $\overline{\gamma}$ is a horizontal path in $\overline{X}$ \textcolor{black}{from} $\overline{C_i}$ to $\overline{C_j}$, then $p\left(\overline{\gamma}\right)$ is a horizontal path $\gamma$ in $X$ which represents an element in
$K_iw_{ii_1}K_{i_1}w_{i_{1}i_{2}}K_{i_2}\cdots w_{i_{n}j}K_j$, \textcolor{black}{where $w_{ii_{1}}, w_{i_{1}i_{2}},\ldots,w_{i_{n}j}$ are the elements of $\pi_1\Gamma_X$ representing non closed embedded paths between $0$-cubes $C_{i}, C_{i_1},C_{i_2}\cdots,C_{j}$, respectively}. However, $N$ contains no such elements, and thus $q:\overline{X}\rightarrow \overline{X}^{E}$ is strict.

\textcolor{black}{Finally, any normal finite index subgroup $N'\subset N$ induces a finite cover $\overline{X}'\rightarrow\overline{X}\rightarrow X$ with the same properties as $\overline{X}$. That is, $\overline{X}'$ splits as a graph of spaces with vertex-spaces isomorphic to $Y$ and the horizontal quotient $\overline{X}'\rightarrow {\overline{X}'}^{E}$ is strict. } 
\end{proof}
\begin{lem}\label{lem:013}
Let $X\rightarrow \Gamma_X$ be a graph of nonpositively curved cube complexes and $q:X\rightarrow X^{E}$ be a strict horizontal quotient, where $X^{E}$ is nonpositively curved. Let $Y$ be a  vertex-space of $X$. If $X$ has no inter-osculating hyperplanes, then  $q\left(Y\right)\subset X^{E}$ is a locally convex subcomplex. 
\end{lem}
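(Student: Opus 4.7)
The plan is to use strictness of $q$ to identify $q(Y)$ with $Y$ and then verify local convexity one vertex at a time: for each $0$-cube $y\in Y$ with image $x=q(y)$, I want $\link_Y(y)$ to embed as a full subcomplex of $\link_{X^{E}}(x)$. Since $Y$ and $X^{E}$ are both nonpositively curved, their vertex links are flag, so fullness reduces to $1$-simplices. That is, it suffices to show: whenever $1$-cubes $\tilde a_1,\tilde a_2$ of $Y$ at $y$ have images $a_i=q(\tilde a_i)$ bounding a corner of a $2$-cube $S\subset X^{E}$, the $\tilde a_i$ already bound a corner of a $2$-cube in $Y$.

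To analyze $S$ I lift it to $X$. The only $2$-cubes of $X$ collapsed by $q$ are those of the form $a\times I$ inside edge spaces (they map to $1$-cubes of $X^{E}$), so every $2$-cube of $X^{E}$ is the image of a $2$-cube $\tilde S$ lying inside some vertex space $X_v$. Let $\tilde y\in X_v$ be the corner of $\tilde S$ above $x$, and let $\tilde a_1',\tilde a_2'$ be its sides at $\tilde y$. If $X_v=Y$, strictness of $q|_Y$ forces $\tilde y=y$ and $\tilde a_i'=\tilde a_i$, so $\tilde S$ itself is the required $2$-cube and we are done.

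The real work is when $X_v\neq Y$. Here $\tilde a_i$ and $\tilde a_i'$ share an $E$-parallelism class, and a path in $\Gamma_X$ from $Y$ to $X_v$ lets me build a ladder of $2$-cubes of the form $a\times I$ in successive edge spaces linking $\tilde a_i$ to $\tilde a_i'$. This ladder realizes $\tilde a_i$ and $\tilde a_i'$ as parallel in the usual cube-complex sense, hence dual to a single hyperplane $\tilde U_i$ of $X$; the crossing inside $\tilde S$ then witnesses a genuine crossing of $\tilde U_1$ and $\tilde U_2$ in $X$.

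To finish, I suppose toward contradiction that $\tilde a_1,\tilde a_2$ do not bound a corner of a $2$-cube in $Y$. A $2$-cube of $X$ with both sides horizontal in $Y$ at $y$ cannot sit inside an edge space (which would force a vertical side), so it would have to lie in $Y$; hence no such $2$-cube exists anywhere in $X$, and $\tilde U_1,\tilde U_2$ osculate at $y$. Combined with the crossing at $\tilde S$, this is an inter-osculation in $X$, contradicting the hypothesis. The step I expect to be the main obstacle is the parallelism ladder: carefully converting $E$-parallelism of $1$-cubes into cube-complex parallelism so that the two $\tilde U_i$ are well-defined is what lets the no-inter-osculation hypothesis bite, and it is where the graph-of-spaces bookkeeping is most delicate.
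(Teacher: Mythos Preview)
Your proof is correct and follows the same strategy as the paper's: reduce local convexity to the no-missing-squares condition, lift a putative missing square $S\subset X^{E}$ to a $2$-cube $\tilde S$ in some vertex space, use the ladder structure of $q^{-1}(a_i)$ to identify the hyperplanes dual to the sides of $\tilde S$ with those dual to $\tilde a_1,\tilde a_2$, and derive an inter-osculation. The paper's version is terser---it summarizes your ladder construction and the case split on $X_v$ in the single line ``$q^{-1}(a_1)$ and $q^{-1}(a_2)$ contain ladders attached to a $2$-cube''---but the content is identical (one caveat: your use of ``horizontal'' and ``vertical'' in the last paragraph is reversed relative to the paper's convention, though your intended meaning is clear).
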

\begin{proof}
It suffices to show that $q\left(Y\right)$ has no missing squares in $X^{E}$. To do so, we show that for each $0$-cube $y\in q\left(Y\right)$, the inclusion $\link_{q\left(Y\right)}\left(y\right)\subset \link_{X^{E}}\left(y\right)$ is full.

Let $y\in q\left(Y\right)$ be a $0$-cube, and let $e\in \link_{X^{E}}\left(y\right)$ be a $1$-simplex whose boundary $0$-simplices $x_1$ and $x_2$ lie in $\link_{q\left(Y\right)}\left(y\right)$ with $e\notin \link_{q\left(Y\right)}\left(y\right)$. Since $q$ is strict, there are consecutive $1$-cubes $a_1, a_2\in q\left(Y\right)$ containing $y$ that are identified with consecutive $1$-cubes of a $2$-cube $S_e\not\subset q\left(Y\right)$. Since $X^{E}$ is nonpositively curved, $e$ is the only $1$-simplex containing $x_1$ and $x_2$ and so $a_1$ and $a_2$ are not consecutive $1$-cubes of a $2$-cube in $q\left(Y\right)$. Then $X$ contains inter-osculating hyperplanes $U_1, U_2$ which cross in $S_e'\subset q^{-1}\left(S_e\right)$, and are dual to $a_1'\subset q^{-1}\left(a_1\right)$ and $a_2'\subset q^{-1}\left(a_2\right)$, respectively, where $a_1', a_2'$ share a common $0$-cube $y'\in q^{-1}\left(y\right)$ but don't bound a corner of a $2$-cube in $Y$. A contradiction. See Figure~\ref{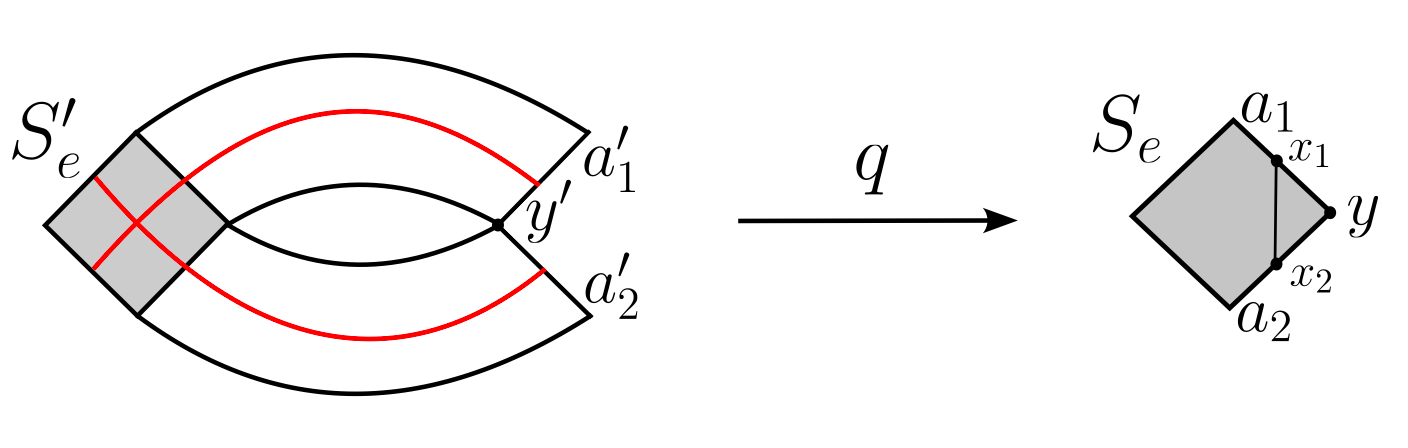}.
\begin{figure}[t]\centering
\includegraphics[width=.6\textwidth]{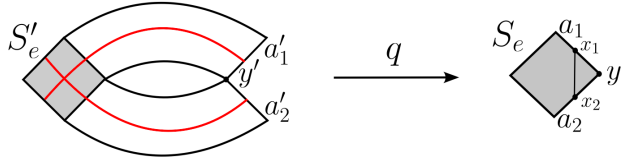}
\caption[]{Inter-osculation arising from consecutive $1$-cubes not bounding a $2$-cube in $Y$.}{\label{2000.png}
}
\end{figure}
\end{proof}

The strategy for obtaining $\overline{X}^{E}$ that is special, is to use multiple coset separability properties of $F$ acting on $\widehat X$ to obtain a compact special cube complex $\overline{X}$ whose horizontal quotient $\overline{X}^{E}$ is special. The property that hyperplanes are embedded and 2-sided is preserved under the map $\overline{X} \rightarrow \overline{X}^{E}$. However, non-inter-osculation and non-self-osculation are not necessarily preserved by $\overline{X}\rightarrow \overline{X}^{E}$. We are therefore forced to revisit and prove a more powerful form  of Theorem~\ref{hw2} , that provides an intermediate cover $\overline{X}$ for which $\overline{X}^{E}$ retains all desired properties.
\begin{lem}\label{no remote osculation}
Let $\widehat{X}\rightarrow \Gamma_ {\widehat{X}}$ be a controlled tree of compact nonpositively curved cube complexes \textcolor{black}{with isomorphic vertex-spaces}. Let $F$ be a free group acting freely and cocompactly on $\Gamma_ {\widehat{X}}$ and $\widehat{X}$, so that $\widehat{X}\rightarrow \Gamma_ {\widehat{X}}$ is $F$-equivariant. Suppose $\widehat{X}$ \textcolor{black}{is special}. Then there is a finite index normal subgroup $N\subset F$ and a covering map $\widehat{X}\rightarrow N\backslash\widehat{X}=\overline{X}$ where $\overline{X}$ splits as a graph of cube complexes whose horizontal quotient $\overline{X}^{E}$ contains no self-osculating hyperplanes and no inter-osculating hyperplanes.
\end{lem}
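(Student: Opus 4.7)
The plan is to extend the strategy of Lemma~\ref{lem:012} by using Ribes--Zalesskii separability (Theorem~\ref{thm:RZ}) to avoid several additional families of bad cosets, one for each configuration in $\overline{X}^E$ that we wish to rule out. The logical backbone is the contrapositive of Lemma~\ref{lem:007}: if $\overline{X}$ is special and has no remote self-osculation or remote inter-osculation, then $\overline{X}^E$ has neither self-osculating nor inter-osculating hyperplanes. Since $\widehat{X}$ is a controlled tree of cube complexes with embedded hyperplanes (wall-injectivity together with Lemma~\ref{lem:009}), Lemma~\ref{lem:011} already guarantees that $\widehat{X}$ itself has no remote self-osculation or inter-osculation. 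The only obstructions to the conclusion are $N$-identifications that could either destroy specialness in $\overline{X} = N\backslash\widehat{X}$ or create new remote osculations in $\overline{X}$.

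First I enumerate the bad configurations. For specialness of $\overline{X}$, a new self-crossing, self-osculation, or inter-osculation in the quotient is witnessed by a pair $(\widehat{\sigma}_1, \widehat{\sigma}_2)$ of cubical data in $\widehat{X}$ (a pair of midcubes of a single cube, or a pair of dual 1-cubes at a single 0-cube) together with an element $w \in F$ identifying them in $\overline{X}$; the hypothesis that $\widehat{X}$ has no such configuration forces $w \notin \mathrm{Stab}_F(\widehat{\sigma}_1)$, so I must arrange that $N \cap w\,\mathrm{Stab}_F(\widehat{\sigma}_1) = \emptyset$. For a remote self-osculation of a hyperplane $\widehat{U}$ in $\overline{X}$, two dual 1-cubes $\widehat{a}_1, \widehat{a}_2$ of $\widehat{U}$ in $\widehat{X}$ have terminal 0-cubes lying in distinct horizontal trees $\widehat{T}_1, \widehat{T}_2$, and the bad event is an identification $n\widehat{T}_1 = \widehat{T}_2$ in $\overline{X}$ for some $n \in N$; writing $n \in w\,\mathrm{Stab}_F(\widehat{T}_1)$ for a coset representative $w$, the assumption that $\widehat{X}$ has no remote self-osculation gives $w \notin \mathrm{Stab}_F(\widehat{T}_1)$. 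An analogous analysis produces the cosets to avoid for remote inter-osculation.

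Because $F$ acts cocompactly on $\widehat{X}$, and hyperplanes, 1-cubes, and horizontal trees of $\widehat{X}$ fall into only finitely many $F$-orbits, the full list of bad configurations reduces modulo $F$ to a finite collection of cosets $\{w_\alpha K_\alpha\}_\alpha$, each $K_\alpha \subset F$ being the stabilizer of a cell or a horizontal tree, and each $w_\alpha \notin K_\alpha$. Each $K_\alpha$ is finitely generated: for cell stabilizers this is immediate from cocompactness, and for horizontal-tree stabilizers this is the same finite-generation argument used for the subgroups $K_i$ in the proof of Lemma~\ref{lem:012}. By Theorem~\ref{thm:RZ} applied to the singleton product $K_\alpha$, each coset $w_\alpha K_\alpha$ is closed in the profinite topology of $F$, hence disjoint from some finite-index normal subgroup $N_\alpha \unlhd F$. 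Setting $N = \bigcap_\alpha N_\alpha$ produces a finite-index normal subgroup such that $\overline{X} = N\backslash\widehat{X}$ is special and has no remote self-osculation or inter-osculation, and then Lemma~\ref{lem:007} delivers the conclusion that $\overline{X}^E$ has no self-osculating or inter-osculating hyperplanes.

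The main obstacle I expect is the bookkeeping of horizontal-tree stabilizers: one has to verify that each such stabilizer is finitely generated, that there are only finitely many $F$-orbits of pairs (hyperplane, pair of dual 1-cubes) that could create a remote osculation, and that each coset representative $w$ really does lie outside the relevant stabilizer (which is precisely where the hypothesis on $\widehat{X}$ enters). This parallels the treatment of horizontal closed paths based at finitely many 0-cubes in Lemma~\ref{lem:012}, but must now be carried out uniformly for the richer data of midcubes, dual 1-cubes, and pairs of crossing hyperplanes.
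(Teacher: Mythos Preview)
There is a genuine gap in your finiteness reduction. You assert that ``the full list of bad configurations reduces modulo $F$ to a finite collection of cosets $\{w_\alpha K_\alpha\}_\alpha$'', but this is false for the remote (self-/inter-)osculation configurations. A remote self-osculation datum is a tuple $(\widehat U,\hat a_1,\hat a_2)$ with $\hat a_1,\hat a_2$ dual to $\widehat U$, and the associated bad set is the single coset $\{n:n\widehat T_1=\widehat T_2\}$. While hyperplanes, $1$-cubes, and horizontal trees individually fall into finitely many $F$-orbits, \emph{pairs} $(\hat a_1,\hat a_2)$ of dual $1$-cubes of a fixed hyperplane $\widehat U$ fall into infinitely many $\Stab(\widehat U)$-orbits (their relative position along $\widehat U$ is unbounded), so there are infinitely many $F$-orbits of such tuples and hence infinitely many distinct cosets $w_\alpha K_\alpha$ to separate. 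Normality of $N$ lets you cyclically conjugate a product of cosets but does not collapse a genuine multi-factor product to a single coset.

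This is precisely why the paper's argument does not attempt to enumerate configurations directly. Instead it parametrizes: finitely many $F$-orbits of horizontal trees $T_i$, then finitely many $\Stab(T_i)$-orbits of osculating hyperplanes $A_{ij}$, then finitely many $\Stab(A_{ij})$-orbits of crossing hyperplanes $B_{ijk}$. A bad element is then shown to be conjugate into a \emph{finite} union of products of the form $\Stab(T_i)\Stab(A_{ij'})\,h\,\Stab(B_{ijk})\Stab(A_{ij})\Stab(T_i)$ (and analogously for self-osculation). Separating $1_F$ from such a product genuinely requires Theorem~\ref{thm:RZ} for products of several finitely generated subgroups, not merely the single-coset case you invoke. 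Your outline would work if you replaced the single-coset bookkeeping by this layered orbit decomposition and applied Ribes--Zalesskii in full strength; as written, the step ``finitely many $w_\alpha K_\alpha$'' does not hold.
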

\begin{proof}
Since $\widehat{X}$ has no self-crossing hyperplanes, we can identify each immersed hyperplane with its image in $\widehat{X}$.  We first find a finite graph of cube complexes $\overline{X}$ whose horizontal quotient has no inter-osculating hyperplanes. We do so by finding an appropriate finite index subgroup $N\subset F$ and taking the quotient $N\backslash\widehat{X}=\overline{X}$. \textcolor{black}{Note that Lemma~\ref{lem:012} allows us to pass to a finite cover, if necessary, to ensure that the  horizontal quotient is a cube complex.} By Lemma~\ref{lem:007}, the horizontal quotient $\overline{X}^{E}$ has inter-osculation if $\overline{X}$ has remote inter-osculation. Remote inter-osculation in $\overline{X}$ occurs if there are crossing hyperplanes $A, B$ of $\widehat{X}$ and an element $g\in F$ such that $gB$ and  $A$ osculate with a horizontal graph $T$ in $\widehat{X}$. Such an element is called a \textit{remote inter-osculator} at $T$. Let $\mathcal{R}\subset F$ be the set of remote inter-osculators. We characterize the elements of $\mathcal{R}$ and use subgroup separability to find a finite index subgroup of $F$ that is disjoint from the set $\mathcal{R}$.

By $F$-cocompactness, there are finitely many $F$-orbits of horizontal graphs. Let $\left\{T_i\right\}_{i=1}^m$ be their representatives. For each tree $T_i\in \left\{T_i\right\}_{i=1}^m$ there are finitely many $\Stab\left(T_i\right)$-orbits of hyperplanes that osculate with $T_i$. Let $\left\{A_{ij}\right\}_{j=1}^{r_i}$ be their representatives. Similarly, for each hyperplane $A_{ij}\in \left\{A_{ij}\right\}_{j=1}^{r_i}$, there are finitely many $\Stab\left(A_{ij}\right)$-orbits of hyperplanes crossing $A_{ij}$. Let $\left\{B_{ijk}\right\}_{k=1}^{s_{ij}}$ be their representatives. See Figure~\ref{fig:5980}. 

For each $B_{ijk}$ and $A_{ir}$, if there is an element $h_{ijkr}$ mapping $B_{ijkr}$ to $A_{ir}$, then the set of all elements $g$ with $gB_{ijk}=A_{ir}$ is:
$$\mathcal{O}_{ijkr}=\stab\left(A_{ir}\right)h_{ijkr}\stab\left(B_{ijk}\right)$$

\begin{figure}[t]\centering
\includegraphics[width=.5\textwidth]{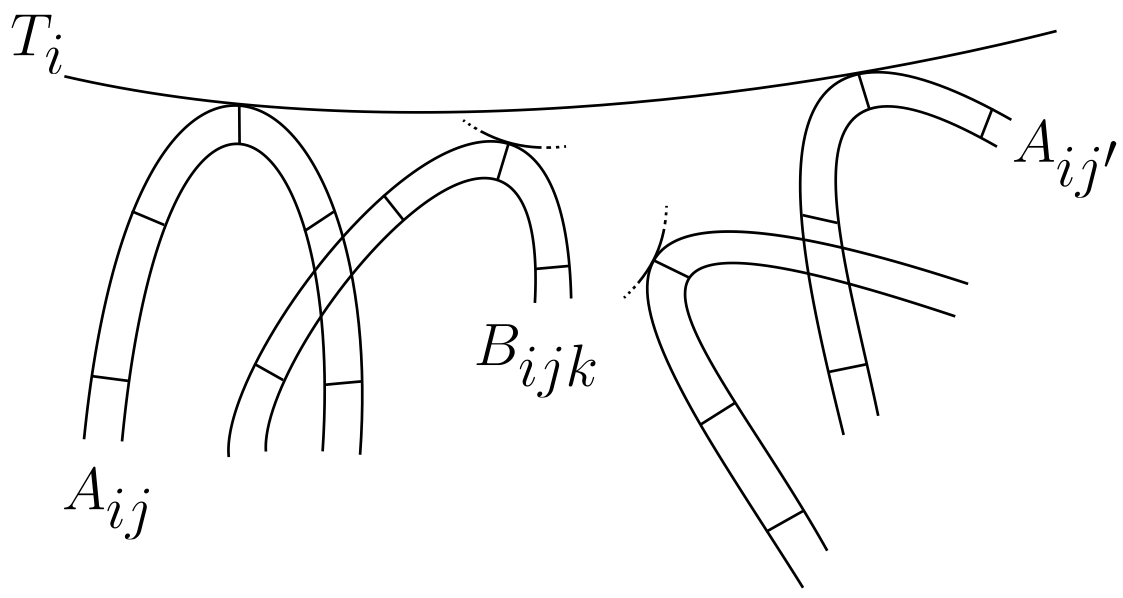}
\caption[$k$-corners ]{\label{fig:5980}
The hyperplane $B_{ijk}$ crosses $A_{ij}$ which osculate with the horizontal graph $T_i$. The element $g$ maps $B_{ijk}$ to $A_{ir}$ which also osculates with $T_i$.}
\end{figure}
Furthermore, by precomposing $g\in\mathcal{O}_{ijkr}$ with elements of $\stab\left(A_{ij}\right)\stab\left(T_i\right)$, postcomposing $g$ with elements of  $\stab\left(T_i\right)$, and then taking the union over $j, k, r$, we obtain the set of remote inter-osculators at $T_i$: $$\mathcal{O}_i=\displaystyle\bigcup_{jkr}\stab\left(T_i\right)\stab\left(A_{ir}\right)h_{ijkr}\stab\left(B_{ijk}\right)\stab\left(A_{ij}\right)\stab\left(T_i\right)$$

Let $\mathcal{O}=\displaystyle\bigcup_{i}\mathcal{O}_i$. Each horizontal graph $T$ is a translate of some $T_i$. Thus each remote inter-osculator at $T$ is conjugate to an element of $\mathcal{O}$. By assumption, $\widehat{X}$ contains no inter-osculating hyperplanes. By Lemma~\ref{lem:011}, $\widehat{X}$ has no remote inter-osculation and thus, $1_F\notin \mathcal{O}$. By cocompactness, the stabilizers are finitely generated. By Theorem~\ref{thm:RZ}, the set $\mathcal{O}$ is closed in the profinite topology, and so there exists a finite index normal subgroup $N$ disjoint from $\mathcal{O}$, and hence disjoint from $\mathcal{R}$. Then the horizontal quotient of $N\backslash\widehat{X}\rightarrow \left(N\backslash\widehat{X}\right)^{E}$ has no inter-osculating hyperplanes.

Similarly, to find $\overline{X}\rightarrow \overline{X}^{E}$ with no self-osculating hyperplanes, we use the same method and follow the steps sketched below.

An element $g\in F$ gives rise to self-osculation in $\overline{X}^{E}$ if $gA=A'$ where $A$ and $A'$ are hyperplanes osculating with the same horizontal graph $T$. Such elements are called \textit{remote self-osculators} at $T$. The set of remote self-osculators at $T_i$ is: 
$$\mathcal{S}_i=\displaystyle\bigcup_{jr}\stab\left(T_i\right)\stab\left(A_{ir}\right)h_{ijr}\stab\left(A_{i{j}}\right)\stab\left(T_i\right)$$
Then any remote self-osculator is conjugate to an element of $\mathcal{S}=\displaystyle\bigcup_{i}\mathcal{S}_i$. By Lemma~\ref{lem:011}, we have $1_F\notin \mathcal{S}$. Then there exists a finite index normal subgroup $N'\subset F$ such that $N'\backslash\widehat{X}\rightarrow \left(N'\backslash\widehat{X}\right)^{E}$ has no self-osculating hyperplanes and the following diagram commutes:

\begin{center}
\begin{tikzcd}
\widehat{X} \arrow[r] \arrow[d] & \Gamma_ {\widehat{X}} \arrow[d] \\
\overline{X} \arrow[r]                      & \Gamma_{\overline{X}}             
\end{tikzcd}
\end{center}

The map $\widehat{X}\rightarrow \left(N\cap N'\right)\backslash\widehat{X}=\overline{X}$ provides the desired covering map.
\end{proof}
\begin{rem}\label{rem:special}
By taking double covers, if necessary, we can ensure that the hyperplanes in $\overline{X}$ are two-sided, which, by Lemma~\ref{lem:003}, means that the hyperplanes of $\overline{X}^{E}$ are two sided as well. \textcolor{black}{Moreover, any finite cover induced by a finite index subgroup of $N\cap N'$ has the properties stated in Lemma~\ref{no remote osculation}.}
\end{rem}

\textcolor{black}{Up until this point, we have shown how to ﬁnd a compact quotient where the pathologies precluding specialness do not appear in the horizontal quotients. In the remainder of this section, we show how to ensure that the horizontal quotient is nonpositively curved.}
\begin{defn}[$k$-corners]
For $k\in \left\{1,2,3\right\}$, a \textit{$k$-cycle of squares} is a planar complex $S_k$ formed by gluing $k$ squares around a vertex $v$. A $k$-cycle of squares has $k$ hyperplanes $\left\{\alpha_i\mid 1\leq i\leq k\right\}$ and $k$ codimension-$2$ hyperplanes $\left\{\beta_j\mid 1\leq j\leq k\right\}$. \textcolor{black}{Recall that a codimension-$2$ hyperplane is the 
intersection of two distinct pairwise intersecting hyperplanes, and the carrier of a codimension-$2$ hyperplane is the cubical neighborhood containing the intersection.} See Figure~\ref{fig:2980}.
\begin{figure}[t]\centering
\includegraphics[width=.7\textwidth]{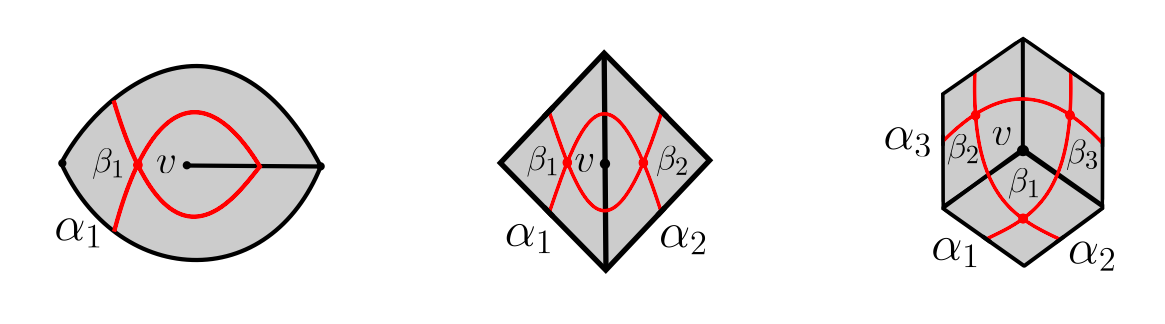}
\caption[$k$-corners ]{\label{fig:2980}
$1$-cycle, $2$-cycle, and $3$-cycle of squares with their dual curves.}
\end{figure}

Let $X$ be a cube complex and $D\subset X$ be an $n$-cube. An \textcolor{black}{$\left(n+2\right)$-dimensional} \textit{$k$-corner of $X$ at $D$} is a combinatorial immersion $\left(Z_k, I^n\right)\rightarrow \left(X,D\right)$ where $Z_k=S_k\times I^n$ and $I^n$ is identified with $\left\{v\right\}\times I^n$ in $Z_k$. We write $Z_k\rightarrow X$ when the map $I^n\rightarrow D$ is clear from the context.  

A $k$-corner is \textit{empty} if $\left(Z_k, I^n\right)\rightarrow \left(X,D\right)$ does not extend to \textcolor{black}{ $\left(I^{n+3},I^n\right)\rightarrow \left(X,D\right)$}. Note that $1$-corners and $2$-corners are always empty. Furthermore, under the immersion $Z_k\rightarrow X$, hyperplanes map to hyperplanes and crossing hyperplanes map to crossing hyperplanes. 
\end{defn}
\begin{rem}\label{rem:004} Nonpositive curvature can be expressed in terms of $k$-corners. Specifically, a cube complex is nonpositively curved if it has no empty $k$-corners. Indeed, if $\link_X\left(D\right)$ has a loop [a bigon] then $X$ has a $1$-corner [a $2$-corner] at $D$. Furthermore, if the no-$\triangle$ property fails at $D$, then $X$ has an empty $3$-corner at $D$.

We also note that if $X$ has an empty $k$-corner at $D$, then $\link_X\left(x\right)$ is not flag for each $0$-cube $x$ of $D$.
\end{rem}
\begin{defn}[$k$-precorners]
Let $X\rightarrow \Gamma_X$ be a graph of cube complexes with cubical horizontal quotient $q: X\rightarrow X^{E}$. Let $Z_k\xrightarrow{\varphi} X^{E}$ be an $\left(n+2\right)$-dimensional $k$-corner and let $\left\{A_i=\alpha_i\times I^n\mid 1\leq i\leq k\right\}$ be hyperplanes of $Z_k=S_k\times I^n$ where $\left\{\alpha_i\mid 1\leq i\leq k\right\}$ are the hyperplanes of $S_k$. Let $\left\{B_j=\beta_j\times I^n\mid 1\leq j\leq k\right\}$ be codim\-ension-$2$ hyperplanes of $Z_k=S_k\times I^n$ where $\left\{\beta_j\mid 1\leq j\leq k\right\}$ are the codimension-$2$ hyperplanes of $S_k$. Let $\left\{H_i\xrightarrow{h_i} X\mid 1\leq i\leq k\right\}$ be the immersed hyperplanes of $X$ such that $\varphi\left(A_i\right)\subset \left(q\circ h_i\right)\left(H_i\right)$, and let $N\left(H_i\right)\rightarrow X$ be their immersed carriers.

The $\left(n+2\right)$-dimensional $k$-\textit{precorner} $P_k$ over the $\left(n+2\right)$-dimensional $k$-corner $Z_k$ is the disjoint union of the corresponding immersed carriers $N\left(H_i\right)\rightarrow X$ amalgamated along the carriers of \textcolor{black}{the codimension-$2$ hyperplanes of $H_i$ that contain the preimages}  $h_i^{-1}\left(q^{-1}\left(B_j\right)\right)$. See Figure~\ref{fig:3980}. Note that there is a \textit{global} \textcolor{black}{map} $h:P_k\rightarrow X$ that restricts to $h_i$ on each immersed hyperplane $H_i$.

A $k$-precorner $P_k\xrightarrow{h}X$ over a $k$-corner $Z_k\xrightarrow{\varphi} X^{E}$ is \textcolor{black}{\textit{empty} if $Z_k\xrightarrow{\varphi} X^{E}$ is empty. $P_k\xrightarrow{h} X$} is \textit{trivial} if 
$\varphi$ lifts to a combinatorial map $Z_k\xrightarrow{} X$ such that the following diagram commutes:

\begin{center}
\begin{tikzcd}
P_k \arrow[r, "h"]                                      & X \arrow[d, "q"]                               \\
Z_k \arrow[r, "\varphi"] \arrow[ru] & X^{E}
\end{tikzcd}
\end{center}
\textcolor{black}{}
\end{defn}

\begin{rem}\label{rem:005}
The \textcolor{black}{map} $P_k\xrightarrow{h} X$ induces a splitting of $P_k$ as a graph of spaces as in the following commutative diagram:

\begin{center}
\begin{tikzcd}
P_k \arrow[d, "h"'] \arrow[r] & \Gamma_{P_k} \arrow[d] \\
X \arrow[r]                   & \Gamma_X              
\end{tikzcd}
\end{center}
Specifically, the vertex-spaces of $P_k$ are the components of the preimages of vertex-spaces of $X$ and the edge-spaces of $P_k$ are the components of the preimages of edge-spaces of $X$. The graph $\Gamma_{P_k}$ is the quotient of $P_k$ obtained by identifying vertex-spaces and edge-spaces of $P_k$ with vertices and edges of $\Gamma_{P_k}$, respectively. The composition $P_k\rightarrow X\rightarrow \Gamma_X$ induces a graph morphism $\Gamma_{P_k}\rightarrow \Gamma_X$ that maps vertices to vertices and open edges to open edges. \textcolor{black}{Note that when a $k$-precorner $P_k\rightarrow X\rightarrow X^{E}$ over a $k$-corner $Z_k\xrightarrow{\varphi} X^{E}$ is trivial, the lift of $\varphi$ maps $Z_k$ into a vertex-space of $X$. This induces a map $Z_k\rightarrow P_k$ whose range lies in a vertex-space of $P_k$ such that the following diagram commutes:}
\begin{center}
    \begin{tikzcd}
P_k \arrow[r, "h"]                                    & X \arrow[d, "q"] \\
Z_k \arrow[r, "\varphi"] \arrow[u, dashed] \arrow[ru] & X^{E}           
\end{tikzcd}
\end{center}
\end{rem}

\begin{lem}\label{lem:015}
Let $\widehat{X}\rightarrow \Gamma_{\widehat{X}}$ be a tree of \textcolor{black}{nonpositively curved} cube complexes where the attaching maps of edge-spaces are injective local isometries. Let $\widehat{X}\rightarrow \widehat{X}^{E}$ be the horizontal quotient and let $P_k\rightarrow \widehat{X}$ be a $k$-precorner over a $k$-corner $Z_k\xrightarrow{\varphi} \widehat{X}^{E}$. Then $P_k$ is trivial \textcolor{black}{and hence nonempty}. 
\end{lem}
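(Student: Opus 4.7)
The plan is to construct the combinatorial lift $f\colon Z_k \to \widehat{X}$ by identifying a single vertex space of $\widehat{X}$ in which the image $\varphi(Z_k)$ can be realized. Since $\widehat{X}$ is a tree of cube complexes with injective locally isometric attaching maps, each horizontal graph of $\widehat{X}$ is a tree and the horizontal quotient $q$ restricts to a combinatorial embedding on each vertex space $X_v$ (strictness, as in Lemma~\ref{lem:012}(2) applied locally). Consequently every $2$-cube of $\varphi(Z_k) \subset \widehat{X}^{E}$ lifts uniquely into every vertex space whose $q$-image contains it, and the existence of $f$ reduces to locating a single vertex space containing lifts of all $k$ squares $\varphi(s_l)$ together with the central $n$-cube $\varphi(\{v\} \times I^n)$.

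For each square $s_l$ of $S_k$, let $V_l$ be the set of vertex spaces $X_v$ with $\varphi(s_l) \in q(X_v)$. I first argue that each $V_l$ is a subtree of $\Gamma_{\widehat{X}}$: if a $2$-cube lies in both $q(X_{v_1})$ and $q(X_{v_2})$, then by the tree-of-spaces structure it lies in $q(X_e)$ for every edge space $X_e$ on the unique tree path from $v_1$ to $v_2$, hence in $q(X_v)$ for every vertex on that path. Next, the precorner amalgamation along the carriers $h_i^{-1}(q^{-1}(B_j))$ designates, for each crossing pair $(A_i, A_j)$, a specific $2$-cube in $\widehat{X}$ whose $q$-image is $\varphi(s_l)$ for the square $s_l$ of $S_k$ where $\alpha_i, \alpha_j$ cross. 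This $2$-cube sits in a single vertex space $X_v$, placing $v \in V_l$ and, via the shared codim-$2$ carrier, in the analogous $V_{l'}$ for the adjacent square sharing that hyperplane. Since $k \in \{1,2,3\}$, every pair of squares in $S_k$ shares a hyperplane at $v$ and every pair of hyperplanes in $S_k$ crosses, so the subtrees $\{V_l\}$ pairwise intersect. The Helly property for subtrees of a tree then furnishes a common vertex $v_0 \in \bigcap_l V_l$.

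With $v_0$ identified, strictness gives the embedding $q|_{X_{v_0}} \colon X_{v_0} \hookrightarrow \widehat{X}^{E}$, and I set $f = (q|_{X_{v_0}})^{-1} \circ \varphi \colon Z_k \to X_{v_0} \subset \widehat{X}$, so that $q \circ f = \varphi$ holds by construction. The main obstacle is the pairwise intersection step $V_l \cap V_{l'} \neq \emptyset$: one must argue that the codim-$2$ amalgamation in the precorner genuinely ties the $2$-cube lifts of adjacent squares into a common vertex space, rather than merely pairing abstract codim-$2$ carriers that live in disjoint vertex spaces. The essential tool is the injective local isometry hypothesis on the attaching maps, which rigidifies how a $2$-cube is embedded through the edge spaces and forces the carrier of each codim-$2$ intersection to be well-localized within the tree of spaces, propagating the $2$-cube lifts through the tree until they coexist in a single vertex space.
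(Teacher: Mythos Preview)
Your Helly strategy is natural, and the claim that each $V_l$ is a subtree is correct: the parallelism class of any cube in a tree of spaces projects to a subtree of $\Gamma_{\widehat{X}}$, so the set of vertices carrying a lift of $\varphi(s_l\times I^n)$ is convex. The gap is in the pairwise-intersection step, and it is genuine. First, your stated mechanism does not do what you claim: the codimension-$2$ hyperplanes $\beta_j$ of $S_k$ are the centres of \emph{individual} squares, so the precorner amalgamation along the carrier over $B_j$ glues $N(H_i)$ to $N(H_j)$ entirely inside one square $s_l$ --- it says nothing about a common vertex space for two \emph{different} squares. Adjacent squares $s_l,s_{l'}$ share a hyperplane $\alpha_i$ but no $\beta_j$. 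Second, the pairwise statement itself (``two $(n{+}2)$-cubes of $\widehat{X}^{E}$ sharing an $(n{+}1)$-face admit lifts in a common vertex space'') is \emph{false} under the hypotheses of the lemma: take two squares joined along a single edge as vertex spaces, with the shared edge as the lone edge space. The local-isometry pushing argument you allude to requires \emph{both} hyperplanes dual to the cube at a leaf to exit through the incident edge; with only one neighbour $s_{l'}$ in play, only the single shared hyperplane $H_i$ is forced to exit, and the other hyperplane of $C_l$ need not enter $X_e$ at all. So the Helly reduction to pairs discards exactly the information needed.

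The paper avoids this by working with all $k$ cubes simultaneously. It takes a minimal subtree $T\subset\Gamma_{\widehat{X}}$ spanned by a choice of $k$ cubes $C_1,\dots,C_k$ mapping onto $\varphi(Z_k)$ and shows $T$ has no spur: at a leaf $v_1$ carrying a single $C_i$, \emph{both} hyperplanes through $C_i$ must leave $v_1$ through the unique edge $e$ (each to reach a different neighbour $C_{i\pm1}$ lying elsewhere in $T$), and then the local-isometry hypothesis forces $C_i$ into $X_e$, producing a copy in the adjacent vertex space and contradicting minimality. Thus $T$ is a single vertex and $q|_T$ furnishes the lift $f$. The essential point is that the spur argument needs the full $k$-configuration; your pairwise reformulation cannot access it.
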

\begin{proof} 
Let $T\subset \widehat{X}$ be a minimal connected subtree of spaces containing $k$ cubes $\left\{C_i\subset P_k\right\}_{i=1}^k$ that map onto $\varphi\left(Z_k\right)$. \textcolor{black}{Then $T$ is finite since any} $k$ cubes mapping onto $\varphi\left(Z_k\right)$ must lie in a finite \textcolor{black}{connected} subcomplex of $\widehat{X}$. Note that the minimality is under inclusion and over all possible collections of $k$ cubes mapping onto $\varphi\left(Z_k\right)$.  Let $T\rightarrow \Gamma_T$ be the underlying tree. We claim that $\Gamma_T$ is a vertex. \textcolor{black}{Note that if $k=1$ then there is only one cube that lies in a single vertex-space which by the minimality of $T$, implies that $\Gamma_T$ is a vertex. So we can assume $2\leq k\leq 3$.} Suppose that $\Gamma_T$ has a spur $e$ incident on vertices $v_1$ and $v_2$, where $\deg\left(v_1\right)=1$.  Let $T_e$ be the corresponding edge-space attached to the vertex-spaces $T_{v_1}$ and $T_{v_2}$. By the minimality of $T$, we can assume without loss of generality that $T_{v_1}$ contains exactly one cube $C_i$. There exist \textcolor{black}{distinct} immersed hyperplanes $H_1\rightarrow \widehat{X}$ and $H_2\rightarrow \widehat{X}$ that cross in $C_i$ and extend to $T_{v_2}$ through $T_e$. Since the attaching maps are local isometries, $C_i$ must be in the edge-space. But in that case, the edge-space $T_e \times \left[-1,1\right]$ contains $C_i\times \left[-1,1\right]$ and so the vertex-space $T_{v_2}$ contains $C_i\times \left\{-1\right\}$. Therefore, there exists a proper subtree $T'\subset T$ containing $k$ cubes mapping onto $\varphi\left(Z_k\right)$, contradicting the minimality of $T$.

Since $\Gamma_T$ is finite and has no spurs, $T$ is a vertex-space. Moreover, $\widehat{X}$ is a tree of spaces, and so the restriction  $q|_{_T}:T\rightarrow \widehat{X}^{E}$ is an embedding. \textcolor{black}{This provides the required map $Z_k\rightarrow T\subset \widehat{X}$. So $P_k$ is trivial. By assumption, the vertex-spaces of $\widehat{X}$ are nonpositively curved. By Remark~\ref{rem:004}, $Z_k$ (and hence $P_k$) is a nonempty $k$-corner ($k$-precorner)}. 
\end{proof}

\begin{figure}[t]\centering
\includegraphics[width=.6\textwidth]{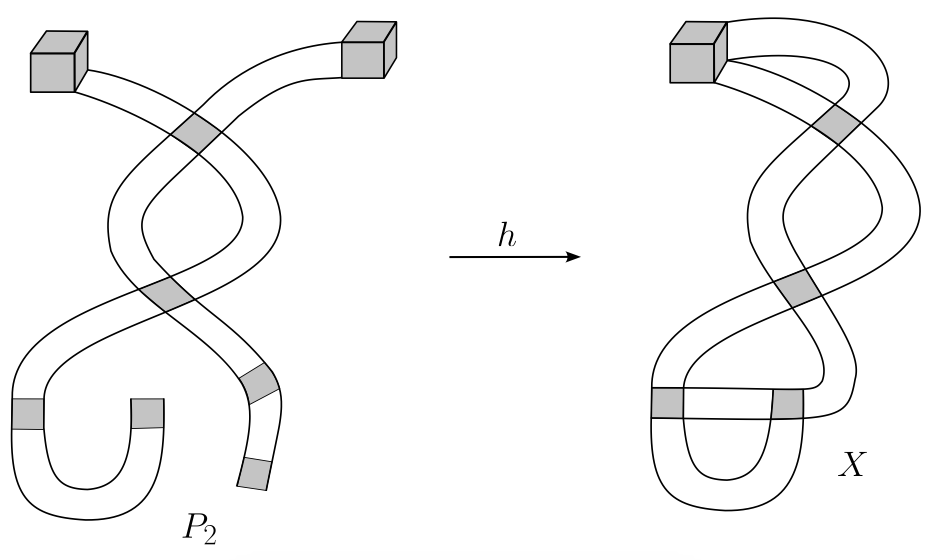}
\caption[$k$-corners ]{\label{fig:3980}
A $2$-precorner.}
\end{figure}

\begin{defn}\label{defn:kchains} Let \textcolor{black}{$X\rightarrow \Gamma_X$ be a graph of cube complexes} and let $F$ be a group acting on $X$ \textcolor{black}{and $\Gamma_X$ so that the map $X\rightarrow \Gamma_X$ is $F$-equivariant}. Given $k\in\left\{1,2,3\right\}$, a \textit{$k$-chain} is an ordered $\left(k+1\right)$-tuple of immersed hyperplanes $\left(H_t\right)_{t=0}^k$ where $H_{t-1}$ crosses $H_t$ for all $1\leq t\leq k$. See Figure~\ref{fig:4980}. 

\textcolor{black}{Given a subgroup $G\subset F$, an element $g\in G\subset F$ is a \textit{closing element} if $g$ maps $H_k$ in some $k$-chain $\left(H_t\right)_{t=0}^k$ to $H_0$ so that under the maps 
$X\rightarrow G\setminus X\rightarrow \left(G\setminus X\right)^{E}$, the hyperplanes $H_t$, for $0\leq t\leq k$, are mapped to the hyperplanes of an empty $k$-corner in $\left(G\setminus X\right)^{E}$}. We say $\left(H_t\right)_{t=0}^k$ is \textit{closed} by $g$. 
\begin{figure}[t]\centering
\includegraphics[width=.7\textwidth]{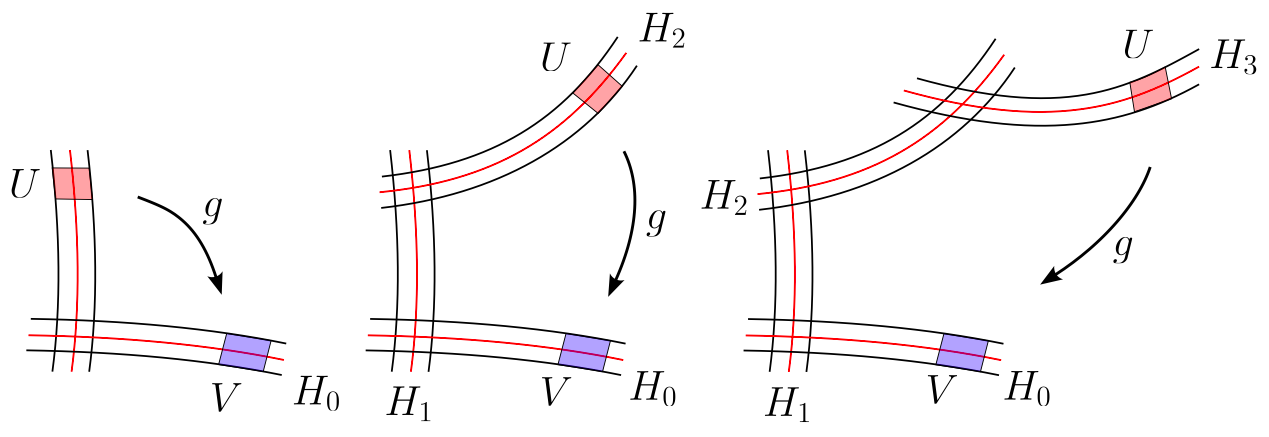}
\caption[$k$-corners ]{\label{fig:4980}
From left: $1$-chain, $2$-chain, and $3$-chain.}
\end{figure}
\end{defn}
\begin{rem}\label{rem:closing}
\textcolor{black}{Let $\widehat{X}$ be a tree of compact isomorphic cube complexes and let $F$ be a group acting freely and cocompactly on $\widehat{X}$. If for some subgroup $G\subset F$, the quotient $G\setminus \widehat{X}$ has an \textcolor{black}{empty} $k$-precorner, then $G$ contains a closing element of some $k$-chain in $\widehat{X}$.
Note that closing elements map codimension-$2$ hyperplanes to codimension-$2$ hyperplanes.} 
\end{rem}
\begin{defn}\label{rem:007} \textcolor{black}{Let $B$ be a compact bouquet of circles and let $X\rightarrow B$ be a graph of cube complexes with one compact vertex-space. Let $\widehat{X}\rightarrow X$ be the covering map induced by the universal covering map $\widetilde{B}\rightarrow B$ so that the following diagram commutes:}
\begin{center}
    \begin{tikzcd}
\widehat{X} \arrow[d] \arrow[r] & \Gamma_{\widehat{X}}=\widetilde{B} \arrow[d] \\
X \arrow[r]                     & B                                           
\end{tikzcd}
\end{center}
Then the free cocompact action $F\curvearrowright \widetilde{B}$ \textcolor{black}{extends to} a free cocompact action $F\curvearrowright \widehat{X}$ \textcolor{black}{mapping vertex-spaces to vertex-spaces}. \textcolor{black}{Below, we consider only the hyperplanes of $\widehat{X}$ that do not lie in the middle of thick edge-spaces. That is, hyperplanes distinct from $X_e\times \left\{0\right\}\subset X_e\times [-1,1]$ for some edge-space $X_e$.} We fix a finite collection of \textcolor{black}{such} immersed hyperplanes $L=L_0\cup L_1\cup L_2\cup L_3$ where:\\
$L_0=\left\{H_1,\ldots,H_{n_0}\right\}$ are $F$-representatives of hyperplanes;\\
$L_1=\displaystyle\bigcup_{i} \left\{H_{i1},\ldots,H_{i{n_i}}\right\}$ are $\Stab\left(H_i\right)$-representatives of hyperplanes crossing $H_i$, for $1\leq i\leq \textcolor{black}{n_0}$;\\
$L_2=\displaystyle\bigcup_{i,j} \left\{H_{ij1},\ldots,H_{ij{n_{ij}}}\right\}$ are $\Stab\left(H_{ij}\right)$-representatives of hyperplanes crossing $H_{ij}$, for $1\leq i\leq \textcolor{black}{n_0}$ and $1\leq j\leq \textcolor{black}{n_i}$; and\\
$L_3=\displaystyle\bigcup_{i,j,t} \left\{H_{ijt1},\ldots,H_{ijt{n_{ijt}}}\right\}$ are $\Stab\left(H_{ijt}\right)$-representatives of hyperplanes crossing $H_{ijt}$, for $1\leq i\leq \textcolor{black}{n_0},\ 1\leq j\leq \textcolor{black}{n_i}$, and $1\leq t\leq \textcolor{black}{n_{ij}}$.

\textcolor{black}{Let $\mathcal{C}$ be the set of all $k$-chains of hyperplanes of $L$. For each hyperplane $A\in L$, there are finitely many $\Stab\left(A\right)$-representatives of codimension-$2$ hyperplanes in $N\left(A\right)$. Choose such representatives. For each $k$-chain  $C=\left(A_t\right)_{t=0}^k$, with $A_t\in L_t$, let $J_C$ be the set of elements of $F$ that map \textcolor{black}{the chosen} $\stab\left(A_k\right)$-representatives of codimension-$2$ hyperplanes of $A_k$ to \textcolor{black}{the chosen} $\stab\left(A_0\right)$-representatives of codimen\-sion-$2$ hyperplanes of $A_0$. 
Note that $J_C$ is finite.}
\end{defn}
\begin{lem}\label{lem:016}
\textcolor{black}{Let $B$ be a compact bouquet of circles and let $X\rightarrow B$ be a graph of cube complexes with one compact vertex-space. Let $\widehat{X}\rightarrow \Gamma_{\widehat{X}}=\widetilde{B}$ be the tree of cube complexes where $\widetilde{B}\rightarrow B$ is the universal covering map such that the following diagram commutes: }
\begin{center}
\begin{tikzcd}
\widehat{X} \arrow[r] \arrow[d] & \Gamma_{\widehat{X}}=\widetilde{B} \arrow[d] \\
X \arrow[r]                      & B                      
\end{tikzcd}
\end{center}
Let $F=\pi_1B$ be the free group acting freely and cocompactly on $\widehat{X}$ and let $g\in F$ be a closing element. Then there exist a $k$-chain $C=\left(A_t\right)_{t=0}^k$ with $A_t\in L_t$, and $f\in F$ such that
\begin{center}
$f^{-1}gf\in \stab\left(A_0\right)J_C\stab\left(A_k\right)\Stab\left(A_{k-1}\right)\cdots\stab\left(A_0\right)$
\end{center}
\end{lem}

\begin{proof} 
Let $\left(B_t\right)_{t=0}^k$ be the $k$-chain in $\widehat{X}$ closed by $g$. Let $V$ and $U$ be codimension-$2$ hyperplanes in $N\left(B_0\right)$ and $N\left(B_k\right)$, respectively, such that $gU=V$. The hyperplane $B_0$ lies in the $F$-orbit of some representative $A_0\in L_0$ and so $B_0=fA_0$ for some $f\in F$. Let $\left\{V_s\mid 1\leq s\leq n_0\right\}$ be $\Stab\left(A_0\right)$-representatives of orbits of codimension-$2$ hyperplanes in $N\left(A_0\right)$. Let $a_0\in\Stab\left(A_0\right)$ and $V_s\in \left\{V_s\mid 1\leq s\leq n_0\right\}$ such that $V=fa_0V_s$. Let $A_1\in L_1$ and $a_0'\in \Stab\left(A_0\right)$ such that $B_1=fa_0'A_1$.

\underline{\textit{Case $k=1$}}: Let $\left\{U_r\mid 1\leq r\leq n_1\right\}$ be $\Stab\left(A_1\right)$-representatives of orbits of codim\-ension-$2$ hyperplanes in $N\left(A_1\right)$. Then $U=fa_0'a_1 U_r$ for some $a_1\in \Stab\left(A_1\right)$ and $U_r\in \left\{U_r\mid 1\leq r\leq n_1\right\}$. So $$gU=V\ \Rightarrow\ gfa_0'a_1U_r=fa_0V_s\ \Rightarrow \left(a_0^{-1}f^{-1}gfa_0'a_1\right)U_r=V_s$$
Therefore $\left(a_0^{-1}f^{-1}gfa_0'a_1\right)\in J_C$, for $C=\left(A_t\right)_{t=0}^1$, and so
$$f^{-1}gf\in \stab\left(A_0\right)J_C\stab\left(A_1\right)\stab\left(A_0\right)$$

\underline{\textit{Case $k=2$}}: We have $B_2=fa_0'a_1A_2$ for some $A_1\in L_1$ and $a_1\in \Stab\left(A_1\right)$. Then $U=fa_0'a_1a_2U_r$ where $a_2\in \Stab\left(A_2\right)$ and $U_r$ is a $\Stab\left(A_2\right)$-representative in $\left\{U_r\mid 1\leq r\leq n_1\right\}$. So, $gU=V\ \Rightarrow \ g\left(fa_0'a_1a_2\right)U_r=fa_0V_s$. Therefore,
$$f^{-1}gf\in \Stab\left(A_0\right)J_C\Stab\left(A_2\right)\Stab\left(A_1\right)\Stab\left(A_0\right)$$

\underline{\textit{Case $k=3$}}: Similarly, $U=fa_0'a_1a_2a_3U_r$ where $a_3\in \Stab\left(A_3\right)$ and $U_r$ is a $\Stab\left(A_3\right)$-representative in $\left\{U_r\mid 1\leq r\leq n_1\right\}$. Thus, $$(gU=V)\Rightarrow g\left(fa_0'a_1a_2a_3\right)U_r=fa_0V_s$$ and so
\begin{center}
$f^{-1}gf\in \Stab\left(A_0\right)J_C\Stab\left(A_3\right)\Stab\left(A_2\right)\Stab\left(A_1\right)\Stab\left(A_0\right)$
\end{center}
See Figure~\ref{2022} for case $k=2$.
\begin{figure}[t]\centering
\includegraphics[width=.7\textwidth]{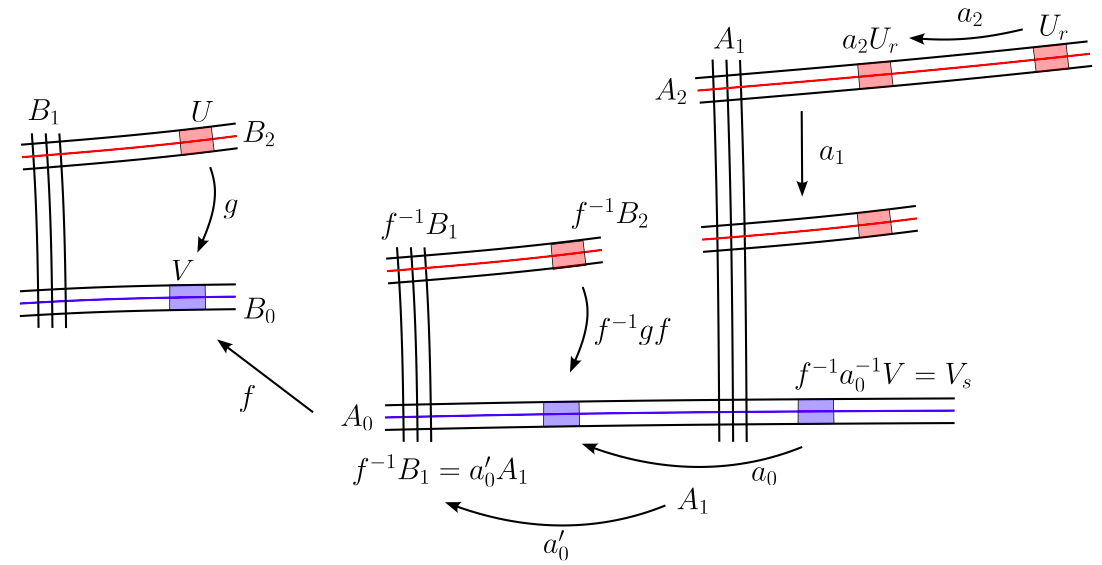}
\caption[$k$-corners ]{\label{2022}
Case $k=2$}
\end{figure}
\end{proof}
\begin{lem}\label{lem:017}
\textcolor{black}{Let $B$ be a compact bouquet of circles and let $X\rightarrow B$ be a graph of cube complexes with one compact nonpositively curved vertex-space and embedded locally convex edge-spaces. Let $\widehat{X}\rightarrow X$ be the covering map induced by the universal covering map $B\rightarrow \widetilde{B}$ where $\widehat{X}$ is a tree of compact nonpositively curved cubes complexes and the following diagram commutes: }
\begin{center}
\begin{tikzcd}
\widehat{X} \arrow[r] \arrow[d] & \Gamma_{\widehat{X}}=\widetilde{B} \arrow[d] \\
X \arrow[r]                      & B                      
\end{tikzcd}
\end{center}
Let $F=\pi_1B$ be the free group acting freely and cocompactly on $\Gamma_{\widehat{X}}\textcolor{black}{\ =\widetilde{B}}$ inducing a free cocompact $F$-action on $\widehat{X}$. Then there exists a compact graph of cube complexes $\overline{X}\rightarrow \Gamma_{\overline{X}}$ and a regular covering map $\widehat{X}\rightarrow \overline{X}$ such that the following diagram commutes and the horizontal quotient $\overline{X}\rightarrow \overline{X}^{E}$ is nonpositively curved:
\begin{center}
\begin{tikzcd}
\widehat{X} \arrow[r] \arrow[d] & \Gamma_{\widehat{X}} \arrow[d] \\
\overline{X} \arrow[r]                      & \Gamma_{\overline{X}}                      
\end{tikzcd}
\end{center}
\textcolor{black}{Furthermore, any intermediate covering map $\widehat{X}\rightarrow \overline{X}'\rightarrow \overline{X}$ induced by a finite index normal subgroup of $ \pi_1\Gamma_{\overline{X}}$ splits as a graph of cube complexes with nonpositively curved horizontal quotient.}
\end{lem}
\begin{proof}
\textcolor{black}{Using Lemma~\ref{lem:012}, we can ensure that any finite cover $\overline{X}$ we find below admits a cubical horizontal quotient.} Fix collections $L$ and  $\mathcal{C}$ as in Definition~\ref{rem:007}. Let \textcolor{black}{$$\mathcal{O}=\displaystyle\bigcup_{1\leq k\leq 3}\ \displaystyle\bigcup_{C\in \mathcal{C}}\left(\stab\left(A_0\right)J_C\stab\left(A_k\right)\cdots\stab\left(A_0\right)\right)$$} 
where $C=\left(A_t\right)_{t=0}^k$ and $J_C$ is as in Definition~\ref{rem:007}. \textcolor{black}{Any empty $k$-precorner in $\overline{X}$ results from a $k$-chain in $\widehat{X}$ that is closed by some element $g\in F$.}  By Lemma~\ref{lem:016}, any closing element in $F$ is conjugate to some element in $\mathcal{O}$. By Lemma~\ref{lem:015}, $\widehat{X}$ admits only trivial $k$-precorners \textcolor{black}{where each trivial $k$-precorner is over a $k$-corner that lifts into a single vertex-space of $\widehat{X}$. By assumption, the vertex-spaces of $\widehat{X}$ are nonpositively curved and thus contain only nonempty $k$-corners. Thus $\widehat{X}$ contains no closed $k$-chains and so}, $1_F\notin \mathcal{O}$. By Theorem~\ref{thm:RZ}, there exists a finite index normal subgroup $G\triangleleft F$ that is disjoint from $\mathcal{O}$. Let $\overline{X}=G\backslash\widehat{X}\rightarrow G\backslash\Gamma_{\widehat{X}}=\Gamma_{\overline{X}}$ and $\widehat{X}\rightarrow \overline{X}$ be the corresponding compact quotient and the regular covering map, respectively. \textcolor{black}{By Remark~\ref{rem:closing}, $\overline{X}$ contains no  empty $k$-precorners}, and thus  the horizontal quotient $\overline{X}^{E}$ \textcolor{black}{has no empty $k$-corners. By Remark~\ref{rem:004}, $\overline{X}^{E}$} is nonpositively curved.

\textcolor{black}{Finally, we note that any finite index normal subgroup of $G$ contains no closing elements and so, the corresponding finite cover splits as a graph of spaces with nonpositively curved horizontal quotient.}
\end{proof}

\section{The Construction}\label{sec:construction}

\begin{defn}\label{defn:localiso} Let $Y$ be a compact nonpositively curved cube complex, and let $Y'\subset Y$ be a subcomplex. The map $\varphi: Y'\subset Y\rightarrow Y$ is a \textit{partial local isometry} if $\varphi$ is a local isometry and both $Y'$ and $\varphi\left(Y'\right)$ are locally convex subcomplexes of $Y$.
\end{defn}
\begin{defn}
Let $Y$ be a nonpositively curved cube complex and let\\ $\mathcal{O}=\left\{\varphi_j:Y_{j}\subset Y\rightarrow Y\right\}_{j=1}^n$ be a collection of injective partial local isometries of $Y$ \textcolor{black}{where each $Y_j$ is connected}. The \textit{realization} of the pair $\left(Y,\mathcal{O}\right)$ is the cube complex $X$ obtained as the following quotient space:
\begin{center}
$X\ =\ \bigslant{Y \displaystyle\bigsqcup_{j=1}^{n} \left(Y_{j}\times I\right)}{\left\{\left(y,0\right)\sim y,\ \left(y,1\right)\sim\varphi_j\left(y\right),\ \forall\ y\in Y_{j}\right\}_{j=1}^n}$
\end{center}

The space $X$ decomposes as a graph of spaces via the map $X\rightarrow B$ with $Y\mapsto v$ and $Y_{j}\times I\mapsto \gamma_j$ where $B$ is the bouquet of $n$ circles $\left\{\gamma_j\right\}_{j=1}^n$ incident to a vertex $v$. 
\end{defn}
\begin{lem}\label{lem:002}
Let $\overline{X}\rightarrow \Gamma_{\overline{X}}$ be a compact graph of \textcolor{black}{cube complexes} with a strict horizontal quotient $\overline{X}\rightarrow \overline{X}^{E}$ \textcolor{black}{and isomorphic vertex-spaces.} \textcolor{black}{Let $\Phi\in \Aut\left(\Gamma_{\overline{X}}\right)$ and let $\overline{\Phi}\in \Aut\left(\overline{X}\right) $ be a combinatorial automorphism that maps vertex-spaces to vertex-spaces isometrically. Suppose that the left square of the diagram below commutes.}
Then \textcolor{black}{there exists an automorphism $\overline{\Phi}^{E}\in \Aut\left(\overline{X}^{E}\right)$ such that the right square of the diagram below commutes:}
\begin{center}
\begin{tikzcd}
\Gamma_{\overline{X}} \arrow[d, "\Phi"'] & \overline{X} \arrow[r, "q"] \arrow[d, "\overline{\Phi}"'] \arrow[l] & \overline{X}^{E} \arrow[d, "\overline{\Phi}^{E}"', dashed] \\
\Gamma_{\overline{X}}                    & \overline{X} \arrow[r, "q"] \arrow[l]                    & \overline{X}^{E}                           
\end{tikzcd}
\end{center}
\end{lem}
\begin{proof}
 \textcolor{black}{Define $\overline{\Phi}^{E}:\overline{X}^{E}\rightarrow \overline{X}^{E}$ by $\overline{\Phi}^{E}\left(y\right)=q\left(\overline{\Phi}\left(q^{-1}\left(y\right)\right)\right)$. Then $\overline{\Phi}^{E}$ is well-defined. Indeed, $q^{-1}\left(y\right)\subset \overline{X}$ is either a point or a horizontal graph. Since $\overline{\Phi}$ is a combinatorial automorphism, it maps points to points and (by the commutativity of the left square) horizontal graphs to horizontal graphs. In both cases, $q\left(\overline{\Phi}\left(q^{-1}\left(y\right)\right)\right)$ is a single point. Moreover, for each point $x\in \overline{X}$, we have $\overline{\Phi}^{E}\left(q\left(x\right)\right)=q\left(\overline{\Phi}\left(q^{-1}\left(q\left(x\right)\right)\right)\right)$. Since $q\left(x\right)$ is a point, $q^{-1}\left(q\left(x\right)\right)$ is either the point $x$ or a horizontal graph containing $x$. In both cases, $\overline{\Phi}^{E}\left(q\left(x\right)\right)=q\left(\overline{\Phi}\left(q^{-1}\left(q\left(x\right)\right)\right)\right)=q\left(\overline{\Phi}\left(\left(x\right)\right)\right)$ and thus the right square commutes. By the commutativity of the left square, $\overline{\Phi}$ permutes the vertex-spaces of $\overline{X}$ which makes $\overline{\Phi}^{E}$ an automorphism of $\overline{X}^{E}$ that permutes copies of the vertex-spaces.} 
\end{proof}

 \begin{thm}\label{thm:001}
Let $Y$ be a compact nonpositively curved cube complex and let $\mathcal{O}$ be the set of injective partial local isometries of $Y$. Then $Y$ embeds in a compact nonpositively curved cube complex $R$ where each $\varphi\in \mathcal{O}$ extends to an automorphism $\Phi\in \Aut\left(R\right)$.
\end{thm}
\begin{proof}
We construct a compact graph of spaces $\overline{X}$ whose horizontal quotient $\overline{X}^{E}=R$ has the desired properties.

Let $\mathcal{O}=\left\{\varphi_j:Y_{j}\subset Y\rightarrow Y\right\}_{j=1}^n$ be the collection of injective partial local isometries of $Y$ and let $X\rightarrow B$ be the realization of the pair $\left(Y,\mathcal{O}\right)$. Let $\gamma_j\rightarrow B$ be the closed path giving the loop in $B$ that corresponds to $\varphi_j$. Let $F=\pi_1B$ and let $\widehat{X}\rightarrow X$ be the covering map induced by the universal covering $\widetilde{B}\rightarrow B$ such that the following diagram commutes:
\begin{center}
\begin{tikzcd}
\widehat {X} \arrow[r] \arrow[d] & \widetilde{B} \arrow[d] \\
X \arrow[r]                      & B                                           
\end{tikzcd}
\end{center}
Then $\widehat{X}\rightarrow \Gamma_{\widehat{X}}=\widetilde{B}$ is a nonpositively curved tree of cube complexes. By  Lemma~\ref{lem:012} and Lemma~\ref{lem:017}, there exists a finite regular cover $\overline{X}\rightarrow X$ that splits as a graph of spaces according to the following commutative diagram and such that the horizontal quotient $\overline{X}\rightarrow\overline{X}^{E}$ is \textcolor{black}{strict and $\overline{X}^{E}$ is} nonpositively curved. Note that each vertex-space of $\overline{X}$ is \textcolor{black}{a \emph{standard copy} of $Y$ according to some fixed isomorphism. We will use the term ``copy'' below to mean such a standard copy}.
\begin{center}
\begin{tikzcd}
\widehat {X} \arrow[r] \arrow[d] & \Gamma_{\widehat{X}} \arrow[d] \\
\overline{X} \arrow[r] \arrow[d] & \Gamma_{\overline{X}} \arrow[d]              \\
X \arrow[r]                      & B                                           
\end{tikzcd}
\end{center}
Fix a \textcolor{black}{vertex $v\in \Gamma_{\overline{X}}$ and let $\overline{X}_v$ be the corresponding vertex-space of $\overline{X}$}.  By subgroup separability of free groups, we can assume that $\Gamma_{\overline{X}}$ has no loops. Thus $\overline{X}_v$ is adjacent to $2n$ vertex-spaces $\left\{\overline{X}_{v_i}\right\}_{i=1}^{2n}$. \textcolor{black}{For each $\varphi_j\in \mathcal{O}$, there are two vertex-spaces $\overline{X}_{v_j}$ and $\textcolor{black}{\overline{X}_{v_{2j}}}$ that are joined to $\overline{X}_v$ by copies of $Y_j\times [-1,1]$ where $Y_j\times [-1,1]$ joins a 
copy of $\varphi_j\left(Y_j\right)$ in $\overline{X}_v$ to a  copy of $Y_j$ in $\overline{X}_{v_j}$, and it joins a  copy of $Y_j$ in $\overline{X}_{v}$ to a  copy of $\varphi_j\left(Y_j\right)$ in $\overline{X}_{v_{2j}}$. Each $Y_j\times [-1,1]$ corresponds to a unique map $\varphi_j\in \mathcal{O}$ and thus to a unique closed path $\gamma_j\rightarrow B$. The lift of $\gamma_j$ at $v$ specifies a unique automorphism $\Phi_j\in \Aut\left(\Gamma_{\overline{X}}\right)$ that maps $v$ to $v_j$. Then there is an automorphism $\overline{\Phi}_j\in \Aut\left(\overline{X}\right)$ that maps $\overline{X}_v$ to $\overline{X}_{v_j}$ such that the following diagram commutes:}
\begin{center}
\begin{tikzcd}
\overline{X} \arrow[r, "\overline{\Phi}_j"] \arrow[d] & \overline{X} \arrow[d] \\
\Gamma_{\overline{X}} \arrow[r, "\Phi_j"]             & \Gamma_{\overline{X}}   
\end{tikzcd}
\end{center}
\textcolor{black}{Note that $\overline{\Phi}_j$ maps copies of $Y_j$ and $\varphi_j\left(Y_j\right)$ in $\overline{X}_v$ to copies of $Y_j$ and $\varphi_j\left(Y_j\right)$ in $\overline{X}_{v_j}$, respectively. However, in $\overline{X}^{E}$ the copy of $Y_j$ in $ q\left(\overline{X}_{v_j}\right)=q\left(\overline{\Phi}_j\left(\overline{X}_{v}\right)\right)$ is identified with a copy of $\varphi_j\left(Y_j\right)$ in $q\left(\overline{X}_v\right)$. By Lemma~\ref{lem:002}, any automorphism $\overline{\Phi}\in \Aut\left(\overline{X}\right)$ induced by an automorphism of the underlying graph $\Phi\in \Aut\left(\Gamma_{\overline{X}}\right)$ descends to an automorphism $\overline{\Phi}^{E}\in \Aut\left(\overline{X}^{E}\right)$. So $\overline{\Phi}_j^{E}\left(q\left(\overline{X}_{v}\right)\right)=q\left(\overline{\Phi}_j\left(\overline{X}_v\right)\right)=q\left(\overline{X}_{v_j}\right)$. Note that $\overline{\Phi}_j^{E}$ maps a copy of $Y_j$ in $q\left(\overline{X}_{v}\right)$ to a copy of  $\varphi_j\left(Y_j\right)$ in $q\left(\overline{X}_{v}\right)$. Identify $Y$ with $q\left(\overline{X}_v\right)$ and assume $\varphi_j:Y_j\subset q\left(\overline{X}_v\right)\rightarrow q\left(\overline{X}_v\right)$. Then $Y$ embeds in $\overline{X}^{E}$ and the restriction $\overline{\Phi}^{E}_j|_{Y_j}=\varphi_j$.} 
\end{proof}
\begin{rem}
Note that $\dim\left(\overline{X}^{E}\right)=\dim\left(Y\right)$.
\end{rem}
\begin{rem}\label{metric}
Following the \textit{Simple Local Gluing} Lemma in~\cite{BridsonHaefliger}, Theorem~\ref{thm:001} can be generalized to nonpositively curved metric spaces provided that some finiteness conditions are satisfied and the edge-spaces are locally convex, closed, and complete subspaces.
\end{rem}

\begin{defn}\label{defn:controlled}
Let $Y$ be a compact nonpositively curved cube complex. A collection of injective partial local isometries $\mathcal{O}=\left\{\varphi_j: Y_j\subset Y\rightarrow Y\right\}_{j=1}^n$ is \textit{controlled} if the  corresponding realization $X\rightarrow B$ is a controlled graph of spaces.
\end{defn}

\begin{thm}[Haglund-Wise \cite{HaglundWiseCoxeter}]\label{hw2}
Let $X$ decompose as a finite graph of spaces, where each vertex-space $X_v$ and edge-space $X_e$ is special with finitely many hyperplanes. Then $X$ has a finite special cover provided
the attaching maps of edge-spaces satisfy the following:
\begin{enumerate}
\item the attaching maps $X_e\rightarrow X_{\iota\left(e\right)}$  and $X_e\rightarrow X_{\tau\left(e\right)}$ are injective local-isometries
\item distinct hyperplanes of $X_e$ map to distinct hyperplanes of $X_{\iota\left(e\right)}$ and $X_{\tau\left(e\right)}$
\item non-crossing hyperplanes map to non-crossing hyperplanes
\item no hyperplane of $X_e$ extends in $X_{\iota\left(e\right)}$ or $X_{\tau\left(e\right)}$ to a hyperplane dual to an edge that intersects $X_e$ in a single vertex.
\end{enumerate}
\end{thm}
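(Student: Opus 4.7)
The plan is to follow the subgroup-separability strategy used earlier in this section---especially in Lemma~\ref{no remote osculation} and Lemma~\ref{lem:017}---to kill, in a finite regular cover of $X$, every global obstruction to specialness.

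First I observe that the hypotheses (1)--(4) are exactly the conditions for $X$ to be a controlled graph of cube complexes. Let $F = \pi_1\Gamma_X$ and let $\widehat X \to \Gamma_{\widehat X}$ be the cover of $X$ corresponding to the universal cover of the underlying graph, so that $F$ acts freely, cocompactly, and equivariantly on $\widehat X$. By Lemma~\ref{lem:009} each hyperplane of $\widehat X$ embeds and meets each vertex space in a single connected hyperplane; by Lemma~\ref{lem:011} $\widehat X$ has neither remote self-osculation nor remote inter-osculation. Moreover, because each vertex space $X_v$ and edge space $X_e$ is itself special, the hyperplane pathologies internal to a single vertex or edge space are also absent in $\widehat X$.

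I then aim to find a finite-index normal subgroup $N \trianglelefteq F$ such that the finite regular cover $\overline X = N \backslash \widehat X$ of $X$ is special. Any failure of specialness in $\overline X$ is witnessed by a nontrivial deck transformation $g \in F$ identifying two hyperplane lifts pathologically: self-crossings or one-sidedness, where $g$ stabilises a hyperplane but reverses its two sides; self-osculations, where $g$ sends one hyperplane to a distinct one osculating with it at a shared vertex; or inter-osculations, where $g$ sends a crossing partner of a hyperplane to a non-crossing partner. Mimicking the bookkeeping in Lemma~\ref{no remote osculation}, cocompactness of the $F$-action lets me pick finitely many $F$-representatives of hyperplanes and $\Stab$-representatives of their crossing and osculating neighbours, and the set $\mathcal{P} \subset F$ of bad witnesses then expresses as a finite union of products of double cosets of the form $\Stab(A_0)\,J\,\Stab(A_k)\cdots\Stab(A_0)$, exactly as with the sets $\mathcal{R}$, $\mathcal{S}$, and $\mathcal{C}$ in the proofs cited above. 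By Theorem~\ref{thm:RZ} each such product is closed in the profinite topology on $F$, and the absence of local and remote pathologies in $\widehat X$ guarantees $1_F \notin \mathcal{P}$. Hence there is a finite-index normal $N \trianglelefteq F$ disjoint from $\mathcal{P}$; after possibly passing to a further double cover to enforce two-sidedness (as in Remark~\ref{rem:special}), $\overline X = N \backslash \widehat X$ is the required finite special cover of $X$.

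The main obstacle will be the inter-osculation case, which---as in Lemma~\ref{lem:017}---requires tracking stabilisers along an entire chain of crossing hyperplanes rather than a single hyperplane. Condition (4) is essential here: it prevents a hyperplane of an edge space from seeding local inter-osculation in a vertex space, so that every global inter-osculation in $\overline X$ is truly a deck-translation effect and hence falls into one of the finitely many profinitely closed products analysed above.
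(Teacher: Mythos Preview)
The paper does not prove Theorem~\ref{hw2}; it is quoted from Haglund--Wise~\cite{HaglundWiseCoxeter} and used as a black box in the proof of Theorem~\ref{thm:002}. So there is no proof here to compare your attempt against.

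Your strategy---pass to the controlled tree $\widehat X$, verify it is special, then use Ribes--Zalesskii to separate the identity from the finitely many coset products encoding hyperplane pathologies in a finite quotient---is sound and very much in the spirit of the paper's other arguments. A few gaps would need filling in a full proof. First, Lemma~\ref{lem:011} only rules out \emph{remote} self/inter-osculation and Lemma~\ref{lem:009} only gives embeddedness; you must still exclude ordinary pathologies in $\widehat X$ involving a horizontal hyperplane (an edge space $X_e$, viewed as a hyperplane of the total space) against a vertical one---this is exactly where condition~(4) enters, not merely in the downstream bookkeeping---and you need cross-injectivity to propagate non-crossing of two vertical hyperplanes along the tree so that any vertical--vertical inter-osculation localises to a single vertex space and is killed by its specialness. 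Second, the bad sets you actually need (for $\overline X$ itself to be special: $g$ with $gU$ crossing $U$; $g\in\Stab(U)$ reversing sides; $g$ sending $U$ to an osculating partner; $g$ sending a crossing partner of $U$ to a mere osculating one) are not the sets $\mathcal R,\mathcal S,\mathcal C$ of Lemmas~\ref{no remote osculation} and~\ref{lem:017}, which were designed to control $\overline X^{E}$; the bookkeeping is parallel but must be redone from scratch. Third, those lemmas assume compact vertex spaces, whereas the hypothesis here is only finitely many hyperplanes; you should check that the finiteness of the relevant $\Stab$-orbit counts still follows under this weaker assumption.
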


\begin{rem}\label{remark}
\textcolor{black}{The finite special cover in Theorem~\ref{hw2} corresponds to a finite index subgroup $N$ of the fundamental group of the underlying graph of $X$ where any subgroup of $N$ induces a cover that is special. See \cite{HaglundWiseCoxeter} for details.  This makes Theorem~\ref{hw2} compatible with the prevailing methods of finding finite covers in this text, namely finding covers of graphs of spaces induced by finite covers of their underlying graphs}.
\end{rem}

\begin{thm}\label{thm:002}
Let $Y$ be a compact special cube complex and let $\mathcal{O}$ be a controlled collection of injective partial local isometries of $Y$. Then  there exists a compact special cube complex $R$ containing $Y$ as a locally convex subcomplex such that each $\varphi\in \mathcal{O}$ extends to some automorphism $\Phi\in \Aut\left(R\right)$.
\end{thm}
\begin{proof}
 \textcolor{black}{Let $X$ be the realization of the pair $\left\{Y,\mathcal{O}\right\}$. Then 
$X$ is a cube complex that splits as a graph of spaces $X\rightarrow \Gamma_X$ where  
$\Gamma_X$ is a compact bouquet of circles and $\pi_1\Gamma_X=F$ is a free group. Since $Y$ is compact and $\mathcal{O}$ is controlled, $X$ is a compact controlled graph of spaces. The claim then follows from Remark~\ref{remark}  Theorem~\ref{hw2}, Theorem~\ref{thm:001}  Lemma~\ref{no remote osculation},
Remark~\ref{rem:special},
Lemma~\ref{lem:013}, and Lemma~\ref{lem:003}.}
\end{proof}

\bibliographystyle{alpha}
\bibliography{wise.bib}

\end{document}